\def\ZG{\mathbb{Z}G}
\def\Or{\mathcal{O}_{\mathcal F}(G)}
\DeclareMathOperator{\FA}{\mathsf{FV}}
\def\mc{\mathcal}
\def\cd{\mathsf{cd}}
\def\cdF{\mathsf{cd}_{\mc F}}
\def\gdF{\mathsf{gd}_{\mc F}}
\newtheorem{theorem}{Theorem}[section]
\newtheorem{lemma}[theorem]{Lemma}
\newtheorem{step}{Step}
\newtheorem{proposition}[theorem]{Proposition}
\newtheorem{corollary}[theorem]{Corollary}
\theoremstyle{definition}
\newtheorem{definition}[theorem]{Definition}
\newtheorem{question}[theorem]{Question}
\newtheorem{assumption}[theorem]{Assumption}
\begin{document}

\title[]{Dismantlable classifying space for the family of parabolic subgroups of a relatively hyperbolic group}
\author[Eduardo Martinez-Pedroza]{Eduardo Martinez-Pedroza}
   \address{Memorial University, St. John's, Newfoundland, Canada A1C 5S7 }
  \email{emartinezped@mun.ca}
\author[Piotr Przytycki]{Piotr Przytycki}
   \address{McGill University, Montreal, Quebec, Canada H3A 0B9}
 \email{piotr.przytycki@mcgill.ca}
\subjclass[2010]{20F67, 55R35, 57S30}
%\keywords{}

\begin{abstract}
Let $G$ be a group hyperbolic relative to a finite collection of subgroups $\mc P$.
Let $\mathcal F$ be the family of subgroups consisting of all the conjugates of subgroups in $\mc P$,
all their subgroups, and all finite subgroups. Then there is a cocompact model
for $E_{\mc F} G$. This result  was known in the torsion-free case.  In the presence of torsion, a new approach was necessary. Our method is to exploit the notion of dismantlability.  A number of sample  applications are discussed.
\end{abstract}

\maketitle

\section{Introduction}
Let $G$ be a finitely generated group hyperbolic relative to a
finite collection $\mc P=\{P_\lambda\}_{\lambda\in \Lambda}$ of
its subgroups (for a definition see Section~\ref{sec:Rips}).
Let $\mathcal F$ be the collection of all the conjugates of
$P_\lambda$ for $\lambda\in\Lambda$, all their subgroups, and
all finite subgroups of $G$. \emph{A model for $E_{\mc F}G$} is
a $G$-complex~$X$ such that all point stabilisers belong
to $\mc F$, and for every $H\in \mc F$ the fixed-point set
$X^H$ is a (nonempty) contractible subcomplex of $X$. A model for $E_{\mc F}G$
is also called the \emph{classifying space for the family $\mc F$}. In this article we describe a particular classifying space for the family $\mc F$. It admits the following simple description.

Let $S$ be a finite generating set of $G$. Let $V=G$ and let
$W$ denote the set of cosets $gP_\lambda$ for $g\in G$ and
$\lambda\in \Lambda$. We consider the elements of $W$ as
subsets of the vertex set of the Cayley graph of $G$ with
respect to $S$. Then $|\cdot , \cdot|_S$, which denotes the
distance in the Cayley graph, is defined on $V\cup W$. The
\emph{$n$-Rips graph} $\Gamma_n$ is the graph with vertex set
$V\cup W$ and edges between $u,u'\in V\cup W$ whenever
$|u,u'|_S\leq n$. The \emph{$n$-Rips complex}
$\Gamma_n^\blacktriangle$ is obtained from $\Gamma_n$ by
spanning simplices on all cliques. It is easy to prove that
$\Gamma_n$ is a fine $\delta$-hyperbolic connected graph (see
Section~\ref{sec:Rips}). Our main result is the following.

\begin{theorem}
\label{thm:MMain} For $n$ sufficiently large, the $n$-Rips
complex $\Gamma_n^\blacktriangle$ is a cocompact model for
$E_{\mc F}G$.
\end{theorem}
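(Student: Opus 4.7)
The plan is to verify three properties of $X := \Gamma_n^\blacktriangle$ for $n$ sufficiently large: the $G$-action is cocompact, every cell stabiliser lies in $\mc F$, and for each $H\in\mc F$ the fixed-point set $X^H$ is nonempty and contractible. The first two are routine; the third is the substantive statement, and this is where dismantlability enters.

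Cocompactness is immediate, since the vertex set $V\cup W$ consists of $1+|\Lambda|$ $G$-orbits and the $n$-Rips incidence relation is $G$-invariant, so $G$ acts with finitely many orbits of simplices. For cell stabilisers, vertex stabilisers are trivial on $V$ and of the form $gP_\lambda g^{-1}$ on $W$, and the pointwise stabiliser of a higher simplex is an intersection of these. Almost malnormality of the parabolics in a relatively hyperbolic group forces each such intersection to be either finite or contained in a single conjugate of some $P_\lambda$, hence in $\mc F$. The setwise stabiliser differs from the pointwise one by a finite permutation subgroup, and a short case check (exploiting that $G$ acts freely on $V$) shows it still lies in $\mc F$.

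Nonemptiness of $X^H$ is easy: if $H\subseteq gP_\lambda g^{-1}$ then $gP_\lambda\in X^H$, and if $H$ is finite then $H$ has a bounded orbit in the $\delta$-hyperbolic graph $\Gamma_n$ and therefore fixes a simplex. For contractibility, I would prove that for $n$ large the full $H$-fixed subgraph $\Gamma_n^H$ is \emph{dismantlable}: it admits a well-ordering of its vertices in which each vertex is dominated, in the closed-neighborhood sense $N[v]\subseteq N[u]$, by one appearing later in the order. A standard argument (iterated folding onto the dominator) then shows that the clique complex of a dismantlable graph is contractible, yielding contractibility of $X^H$.

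The principal obstacle is constructing such a dismantling. Fix an $H$-fixed basepoint $v_0$; for each other $v\in \Gamma_n^H$ one must produce an $H$-fixed dominator $u$ strictly closer to $v_0$. The natural candidate is a canonical ``projection'' of $v$ one step toward $v_0$ along a $\Gamma_n$-geodesic. Hyperbolicity of $\Gamma_n$, with $n$ chosen much larger than $\delta$, supplies the thin-triangle estimates that force $N[v]\subseteq N[u]$, and fineness of $\Gamma_n$ ensures that the set of candidates for $u$ is finite so that the projection can be chosen canonically, and is therefore automatically $H$-invariant. Extra care is needed when $v_0\in W$, so that distance to $v_0$ is not proper on $\Gamma_n^H$; one replaces it with a suitable combinatorial variant on $\Gamma_n$ and argues that the dismantling terminates on a subgraph which is itself collapsible (for instance, a single parabolic vertex or a cone on a finite $H$-invariant clique), giving contractibility of $X^H$.
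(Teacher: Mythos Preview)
Your outline has the right overall architecture, and the minor points (cocompactness, cell stabilisers, nonemptiness for parabolic $H$) are close to the paper's treatment, though your cocompactness argument is incomplete: having finitely many orbits of vertices and a $G$-invariant edge relation does \emph{not} by itself yield finitely many orbits of simplices, since cone vertices have infinite valence. The paper uses that $\Gamma_n$ is fine (Lemma~\ref{lem:GPgraph}) so each edge lies in finitely many triangles, together with the fact that there are only finitely many orbits of edges.

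The real gap is in the dismantlability step. You propose to project a vertex $v$ one step toward a basepoint $v_0$ along a $\Gamma_n$-geodesic and appeal to ``hyperbolicity of $\Gamma_n$, with $n$ chosen much larger than $\delta$.'' But this mixes two incompatible metrics. Adjacency in $\Gamma_n$ is defined by $|u,u'|_S\leq n$, so the inclusion $N[v]\subseteq N[u]$ you need is an $S$-metric statement: for every $c$ with $|v,c|_S\leq n$ you must show $|u,c|_S\leq n$. The thin-triangle input for that estimate has to be a thin-triangle theorem for geodesics in $\Gamma$ with respect to $|\cdot,\cdot|_S$, and $\Gamma$ is \emph{not} hyperbolic when the parabolics are infinite. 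Hyperbolicity of $\Gamma_n$ (equivalently of $\hat\Gamma$) only controls the $\hat\Gamma$-metric, where distances along a peripheral coset collapse; it says nothing about $|\cdot,\cdot|_S$. Likewise ``take $n\gg\delta$'' has no content here, since the $\delta$ you would need is that of $\Gamma$, which is infinite, while the hyperbolicity constant of $\Gamma_n$ does not shrink below~$1$ as $n$ grows. The same confusion undermines your finite-subgroup fixed-point argument: the standard quasi-centre bound gives a set of $\Gamma_n$-diameter $O(\delta_n)$, not $S$-diameter $\leq n$.

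What the paper does instead is prove a bespoke Thin Triangle Theorem (Theorem~\ref{thm:stronglythin}) for $\Gamma$-geodesics in the $S$-metric: a vertex $p^{ab}_i$ on one side is $D$-close in $|\cdot,\cdot|_S$ to the appropriate vertex on one of the other sides \emph{unless} $p^{ab}_i$ is $(\epsilon,R)$-deep in a peripheral coset $w$, in which case it is $w$ that is $D$-close. The dominator of $a$ toward the basepoint $b$ is then taken to be the point $6D$ steps along the $\Gamma$-geodesic from $a$ to $b$ if that point is a transition vertex, and the cone vertex $w$ if it is deep in $w$ (Lemma~\ref{lem:dismantlable}). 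This is precisely why the cone vertices $W$ have to be in the Rips complex. The quasi-centre argument for finite subgroups (Lemma~\ref{lem:quasicentre}) uses the same theorem in the $S$-metric. Finally, the paper does not try to dismantle the (possibly infinite) graph $\Gamma_n^H$ equivariantly; it shows instead that finite $S$-convex hulls are dismantlable and then invokes the general fact (Theorem~\ref{thm:Damian}) that for a dismantlable finite graph all fixed-point sets in the clique complex are contractible, exhausting $\Gamma_n^\blacktriangle$ by such hulls. This sidesteps entirely the need for a canonical, $H$-equivariant choice of dominator.
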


Theorem~\ref{thm:MMain} was known to hold if
\begin{itemize}
\item $G$ is a torsion-free hyperbolic group and $\mc
    P=\emptyset$, since in that case the $n$-Rips complex
    $\Gamma_n^\blacktriangle$ is contractible for $n$
    sufficiently large~\cite[Theorem 4.11]{ABC91}.
\item $G$ is a hyperbolic group and $\mc P=\emptyset$,
    hence $\mathcal F$ is the family of all finite
    subgroups, since in that case $\Gamma_n^\blacktriangle$
    is $\underline{E}(G)$~\cite[Theorem 1]{MS02}, see
    also~\cite[Theorem 1.5]{HOP14} and~\cite[Theorem 1.4]{La13}.
\item $G$ is a torsion-free relatively hyperbolic group,
    but with different definitions of the $n$-Rips complex,
    the result follows from the work of Dahmani~\cite[Theorem 6.2]{Da03-2},
    or Mineyev and Yaman~\cite[Theorem 19]{MiYa07}.
\end{itemize}
In the presence of torsion, a new approach was necessary. Our
method is to exploit the notion of \emph{dismantlability}.
Dismantlability, a property of a graph guaranteeing strong
fixed-point properties (see~\cite{Po93})  was brought to geometric
group theory by Chepoi and Osajda~\cite{ChOs15}. Dismantlability was
observed for hyperbolic groups in \cite{HOP14}, following
the usual proof of the contractibility of the Rips complex \cite[Prop III.$\Gamma$ 3.23]{BrHa99}.

While we discuss the $n$-Rips complex only for finitely
generated relatively hyperbolic groups, Theorem~\ref{thm:MMain}
has the following extension.

\begin{corollary}
\label{cor:infinite} If $G$ is an infinitely generated group
hyperbolic relative to a finite collection $\mc P$, then there
is a cocompact model for $E_{\mc F}G$.
\end{corollary}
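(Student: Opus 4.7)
The plan is to bypass the explicit use of a Cayley graph in the construction of the $n$-Rips complex, which is the only place where finite generation of $G$ is needed. Since the collection $\mc P$ is finite, even when $G$ is infinitely generated one has Bowditch's graph-theoretic characterisation of relative hyperbolicity: $G$ admits an action on a connected fine $\delta$-hyperbolic graph $K$ with finitely many $G$-orbits of edges, finite edge stabilisers, and vertex stabilisers in $\mc F$. I would use $K$ in place of the Cayley graph.

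The construction then proceeds as in Theorem~\ref{thm:MMain}. Define an $n$-thickening $K_n$ of $K$ by joining pairs of vertices at $K$-distance at most $n$, and let $K_n^\blacktriangle$ be its flag completion. Fineness together with finitely many $G$-orbits of edges in $K$ guarantees that, in $K_n$, cliques have uniformly bounded cardinality and that there are only finitely many $G$-orbits of cliques of each size. Consequently $K_n^\blacktriangle$ is finite-dimensional and the $G$-action on it is cocompact. The $\delta$-hyperbolicity and fineness of $K_n$ follow from those of $K$ exactly as in the finitely generated case.

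The remainder of the argument is then to verify that the proof of Theorem~\ref{thm:MMain}---in particular the dismantlability of $K_n$ and the non-emptiness and contractibility of fixed-point sets $(K_n^\blacktriangle)^H$ for $H\in\mc F$---uses only these abstract features of the underlying graph (fine, $\delta$-hyperbolic, cocompact $G$-action with finite edge stabilisers and vertex stabilisers in $\mc F$) and never the specifically Cayley-graph-like structure of $\Gamma_n$. The main obstacle is this audit of the proof: any implicit appeal to local finiteness of $\Gamma_n$ has to be replaced by an appeal to fineness of $K$, which takes over the role that local finiteness plays when $G$ is finitely generated. Since the dismantlability approach is local and combinatorial in nature, and since the centre/barycentre-style constructions used to fill fixed-point sets already pass through fineness rather than local finiteness, this audit should go through cleanly and yield the desired cocompact model for $E_\mc F G$.
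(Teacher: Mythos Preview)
Your proposal has a genuine gap: the thickening $K_n$ you describe---joining vertices of $K$ at $K$-distance at most $n$---will in general have infinitely many $G$-orbits of edges and infinite cliques, so $K_n^\blacktriangle$ is neither cocompact nor finite-dimensional. Concretely, take $K=\hat\Gamma$ and a cone vertex $w=P_\lambda$ with $P_\lambda$ infinite. Every $g\in P_\lambda$ is at $\hat\Gamma$-distance $1$ from $w$, so any two elements of $P_\lambda$ are at $\hat\Gamma$-distance at most $2$ and form an infinite clique in $K_2$. Moreover the edges $(e,g)$ for $g\in P_\lambda$ lie in pairwise distinct $G$-orbits, since the orbit of a pair $(v,v')\in V\times V$ is determined by $v^{-1}v'$. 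Thus fineness of $K$ does \emph{not} pass to $K_n$, and your assertion that cliques in $K_n$ have uniformly bounded cardinality is false. The paper's $\Gamma_n$ avoids exactly this because its edges are defined via the $S$-distance in the locally finite Cayley graph $\Gamma$, not via the distance in the coned-off graph; an abstract $(G,\mc P)$-graph offers no substitute for that auxiliary locally finite metric.

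The paper's proof takes an entirely different route. It invokes Osin's structure theorem \cite[Theorem~2.44]{Os06}: there is a finitely generated $G'\leq G$, hyperbolic relative to $P_\lambda'=P_\lambda\cap G'$, such that $G$ is the amalgam of $G'$ with the $P_\lambda$ along the $P_\lambda'$. Theorem~\ref{thm:MMain} then yields a cocompact model $X'$ for $E_{\mc F'}G'$, and one assembles translates of $X'$ into a tree of copies glued along cone vertices to obtain the cocompact model for $E_{\mc F}G$. This sidesteps any need to redo the dismantlability or thin-triangle arguments, which in Section~\ref{sec:triangle} rely specifically on the interplay between geodesics in $\Gamma$ and in the relative Cayley graph $\bar\Gamma$ (Hruska's and Osin's results), not merely on abstract fine-graph properties.
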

\begin{proof}
By~\cite[Theorem 2.44]{Os06}, there is a finitely generated
subgroup $G'\leq G$ such that $G$ is isomorphic to $G'$
amalgamated with all $P_\lambda$ along $P'_\lambda=P_\lambda
\cap G'$. Moreover, $G'$ is hyperbolic relative to
$\{P_\lambda'\}_{\lambda\in \Lambda}$. Let $S$ be a finite
generating set of $G'$. While $S$ does not generate $G$, we can
still use it in the construction of
$X=\Gamma^\blacktriangle_n$. More explicitly, if $X'$ is the
$n$-Rips complex for $S$ and $G'$, then $X$ is a tree of copies
of $X'$ amalgamated along vertices in $W$. Let $\mathcal{F}'$
be the collection of all the conjugates of $P'_\lambda$, all
their subgroups, and all finite subgroups of $G'$. By
Theorem~\ref{thm:MMain}, we have that $X'$ is a cocompact model
for $E_{\mc F'}G'$, and it is easy to deduce that $X$ is a
cocompact model for $E_{\mc F}G$.
\end{proof}

\subsection*{Applications}

On our way towards Theorem~\ref{thm:MMain} we will establish
the following, for the proof see Section~\ref{sec:Rips}. We learned from
Fran\c{c}ois Dahmani that this corollary can be also obtained from one of
Bowditch's approaches to relative hyperbolicity.

\begin{corollary}\label{sec:subgroups} There is finite collection of finite
subgroups $\{F_1,\ldots, F_k\}$ such that any finite subgroup
of $G$ is conjugate to a subgroup of some $P_\lambda$ or some
$F_i$.
\end{corollary}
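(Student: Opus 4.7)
The plan is to extract the corollary from Theorem~\ref{thm:MMain}. Fix $n$ large enough that $X = \Gamma_n^\blacktriangle$ is a cocompact model for $E_{\mc F}G$, and let $\sigma_1, \ldots, \sigma_m$ be a set of representatives of the finitely many $G$-orbits of simplices of $X$. I would classify the stabilisers of these representatives and then show that every finite subgroup of $G$ is subconjugate to one of them.

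For the classification, consider $\sigma_i$. If some vertex of $\sigma_i$ lies in $W$, say $hP_\lambda$, then the setwise stabiliser of $\sigma_i$ is contained in the vertex stabiliser $hP_\lambda h^{-1}$, and hence any subgroup of $\text{Stab}(\sigma_i)$ is conjugate to a subgroup of $P_\lambda$. Otherwise every vertex of $\sigma_i$ lies in $V = G$; since $G$ acts freely on $V$ by left multiplication, the stabiliser of $\sigma_i$ acts faithfully on its finite vertex set and is therefore itself finite. Take the $F_1, \ldots, F_k$ of the corollary to be these finite stabilisers.

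To finish, let $H \leq G$ be finite. Since $H \in \mc F$, the fixed-point set $X^H$ is nonempty, so $H$ fixes some point $p$ of $X$. The point $p$ lies in the interior of a unique simplex $\sigma$, and $H$ is therefore contained in the setwise stabiliser of $\sigma$. Writing $\sigma = g \sigma_i$ gives $g^{-1} H g \leq \text{Stab}(\sigma_i)$, so by the preceding dichotomy $H$ is conjugate to a subgroup either of some $P_\lambda$ or of some $F_i$, as required.

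The main obstacle is not in this extraction, which is essentially formal, but already encoded in Theorem~\ref{thm:MMain}: the cocompactness of the action together with the nonemptiness of $X^H$ for finite $H$ are exactly what the $n$-Rips construction is designed to deliver. The one point requiring minor care is that ``$H$ fixes $p$'' must be upgraded to ``$H$ setwise stabilises the carrier simplex of $p$''; this is immediate from the simplicial structure, and if one wants the setwise and pointwise stabilisers of each simplex to coincide, it suffices to pass to the barycentric subdivision of $X$, which is again a cocompact model for $E_{\mc F}G$.
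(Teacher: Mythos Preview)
Your overall strategy is the same as the paper's: use cocompactness to list finitely many simplex-orbit representatives, take the finite stabilisers among them as the $F_i$, and use the fixed-point property to trap any finite $H$ inside a conjugate of one of these stabilisers. The paper in fact invokes only Theorem~\ref{thm:fixpoint} together with Corollaries~\ref{cor:barycenters} and~\ref{cor:cocompact}, rather than the full Theorem~\ref{thm:MMain}; the contractibility of fixed-point sets is not needed here, but your appeal to Theorem~\ref{thm:MMain} is not circular.

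There is, however, a genuine error in your classification step. You assert that if some vertex of $\sigma_i$ lies in $W$, say $hP_\lambda$, then the setwise stabiliser of $\sigma_i$ is contained in $hP_\lambda h^{-1}$. This fails when $\sigma_i$ has two or more $W$-vertices: an element of $\mathrm{Stab}(\sigma_i)$ may permute them nontrivially and then lies in none of their stabilisers. Concretely, if $g\notin P_\lambda$ has $g^2\in P_\lambda$ and $|gP_\lambda,P_\lambda|_S\le n$, then $g$ stabilises the edge $\{P_\lambda,gP_\lambda\}$ while fixing neither endpoint; and $\langle g\rangle$ need not be conjugate into any $P_\mu$ at all (the $P_\mu$ could be torsion-free). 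So your dichotomy, and the list of $F_i$ built from it, can miss such finite subgroups.

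The fix is immediate and is exactly what the paper does: abandon the $W$-vertex criterion and take the $F_i$ to be the (setwise) stabilisers of representatives of \emph{all} simplex orbits that are not single vertices. By Corollary~\ref{cor:barycenters} these stabilisers are finite, regardless of whether the simplex has $W$-vertices. Then a finite $H$ either fixes a vertex---hence is trivial (if the vertex lies in $V$) or lies in a conjugate of some $P_\lambda$ (if in $W$)---or fixes the barycentre of a higher simplex, hence lies in a conjugate of some $F_i$.
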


Note that by \cite[Theorem 2.44]{Os06},
Corollary~\ref{sec:subgroups} holds also if $G$ is infinitely
generated, which we allow in the remaining part of the
introduction. 
%By Corollary~\ref{sec:subgroups}, we can always enlarge $\mc P$ so that each finite subgroup of $G$ is
%conjugate to a subgroup of some $P_\lambda$.

Our next application regards the cohomological dimension of
relatively hyperbolic groups in the framework of Bredon
modules. Given a group $G$ and a nonempty family $\mc F$
of subgroups closed under conjugation and taking subgroups,
the theory of (right) modules over the orbit category
$\mc{O_F}(G)$ was established by Bredon~\cite{Br67}, tom
Dieck~\cite{tD87}  and L{\"u}ck~\cite{Lu89}. In the case where
$\mc F$ is the trivial family, the $\Or$-modules are
$\ZG$-modules. The notions of cohomological dimension $\cdF(G)$
and finiteness properties $FP_{n, \mc F}$ for the pair $(G, \mc
F)$ are defined analogously to their counterparts $\cd(G)$ and
$FP_n$. The geometric dimension $\gdF(G)$ is defined as the
smallest dimension of a model for $E_{\mc F}G$. A theorem of
L\"uck and Meintrup~\cite[Theorem 0.1]{LuMe00} shows that  \[
\cdF(G) \leq \gdF(G) \leq \max\{3, \cdF(G)\}.\] Together with
Theorem~\ref{thm:MMain}, this yields the following. Here as before $\mathcal F$ is the collection of all the conjugates of $\{P_\lambda\}$, all their subgroups, and
all finite subgroups of $G$.

\begin{corollary} Let $G$ be relatively hyperbolic. Then $\cdF(G)$ is finite.
\end{corollary}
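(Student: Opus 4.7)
The plan is a short deduction from Theorem~\ref{thm:MMain} (or, in the infinitely generated case, Corollary~\ref{cor:infinite}) combined with the L\"uck--Meintrup inequality recalled in the paragraph preceding the statement. Namely, either result produces a cocompact $G$-CW model $X$ for $E_{\mc F}G$: in the finitely generated case one takes $X=\Gamma_n^\blacktriangle$ for $n$ sufficiently large, and in the infinitely generated case one takes the tree of $n$-Rips complexes constructed in Corollary~\ref{cor:infinite}.

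Since $X$ is $G$-cocompact, the quotient $G\backslash X$ is a compact CW complex, hence a finite CW complex; in particular $\dim X=\dim(G\backslash X)$ is finite. Therefore the geometric dimension $\gdF(G)\leq \dim X$ is finite, and the inequality $\cdF(G)\leq \gdF(G)$ from~\cite[Theorem 0.1]{LuMe00} immediately yields $\cdF(G)<\infty$.

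There is essentially no obstacle here: the genuine work is already contained in Theorem~\ref{thm:MMain} and its extension Corollary~\ref{cor:infinite}, together with the cited theorem of L\"uck and Meintrup. If one wished to make the bound on $\cdF(G)$ more explicit one could attempt to estimate $\dim \Gamma_n^\blacktriangle$ combinatorially from the $n$-Rips graph, using the fineness of $\Gamma_n$ to bound the sizes of cliques containing any fixed edge together with the finiteness of the number of $G$-orbits of edges in $\Gamma_n$; but this refinement is not needed for the qualitative conclusion.
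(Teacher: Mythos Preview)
Your proof is correct and follows exactly the approach indicated in the paper: the corollary is stated immediately after the L\"uck--Meintrup inequality with the remark that it follows from that inequality together with Theorem~\ref{thm:MMain} (and Corollary~\ref{cor:infinite} for the infinitely generated case), and your argument simply spells this out. The only minor imprecision is that $G\backslash X$ need not literally be a CW complex when the action is not free, but cocompactness directly means finitely many $G$-orbits of cells, hence $\dim X<\infty$, which is all you need.
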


The \emph{homological Dehn
function} $\FA_X(k)$ of a simply-connected cell complex $X$
measures the difficulty of filling cellular $1$-cycles with
$2$-chains. For a
finitely presented group $G$ and $X$ a model for $EG$ with
$G$-cocompact $2$-skeleton, the growth rate of
$\FA_{G}(k):=\FA_X(k)$ is a group invariant~\cite[Theorem
2.1]{Fl98}.
The function $\FA_G(k)$ can also be defined  from algebraic considerations under the weaker assumption that $G$ is $FP_2$, see~\cite[Section 3]{HaMa15}.
Analogously, for a group $G$ and a family of subgroups $\mc F$ with a cocompact model for $E_{\mc F}G$, there is relative homological Dehn function $\FA_{G,\mc F}(k)$  whose growth rate
is an invariant of the pair $(G, \mc F)$, see~\cite[Theorem 4.5]{MP15-2}.

Gersten proved that a group $G$ is hyperbolic if and only if it is $FP_2$ and the growth rate of $\FA_G(k)$ is linear~\cite[Theorem 5.2]{Ge96}. An  analogous characterisation for relatively hyperbolic groups is proved in~\cite[Theorem 1.11]{MP15-2} relying on the following corollary. We remark that a converse of  Corollary~\ref{thm:char} requires an additional condition that $\{P_\lambda\}$ is an almost malnormal collection, see~\cite[Theorem 1.11(1) and Remark 1.13]{MP15-2}.

\begin{corollary}\label{thm:char}
Let $G$ be relatively hyperbolic. Then $G$ is $FP_{2, \mc F}$ and $\FA_{G, \mc F}(k)$ has linear growth.
\end{corollary}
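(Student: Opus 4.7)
The plan is to use $X := \Gamma_n^\blacktriangle$ from Theorem~\ref{thm:MMain} as the preferred cocompact model for $E_{\mc F}G$, and to deduce both assertions from its geometry.

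For the $FP_{2, \mc F}$ part, the idea is that having any cocompact $2$-skeleton already suffices. Applying the Bredon fixed-point functors $H\mapsto C_\bullet(X^H;\Z)$ to the cellular chain complex of $X^{(2)}$ yields a partial resolution of the constant $\Or$-module $\underline{\Z}$ by $\Or$-modules that are finitely generated and free in degrees $\leq 2$: free because cell stabilisers belong to $\mc F$ (so the orbit modules are representable), and finitely generated because $X^{(2)}/G$ has finitely many cells. This is precisely the condition $FP_{2, \mc F}$.

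For the linear growth of $\FA_{G, \mc F}$, by the invariance statement of~\cite[Theorem 4.5]{MP15-2} it suffices to bound the homological Dehn function of $X^{(2)}$ itself. I would mimic the standard proof that a hyperbolic group admits a linear isoperimetric inequality (see~\cite[Prop.~III.$\Gamma$~3.23]{BrHa99} and~\cite[Theorem 5.2]{Ge96}), but carried out at the level of cellular chains rather than van~Kampen diagrams. Given a cellular $1$-cycle $c$ of length $k$ in $X^{(2)}$, I would realise it as a closed edge-path $\gamma$ in $\Gamma_n$, choose a basepoint $x_0$ on $\gamma$, and for each vertex $x$ of $\gamma$ fix a $\Gamma_n$-geodesic $[x_0,x]$. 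The $\delta$-hyperbolicity of $\Gamma_n$, combined with the fact that every clique of $\Gamma_n$ spans a simplex of $\Gamma_n^\blacktriangle$, lets one subdivide the disk bounded by $\gamma$ into at most $Ck$ triangles, each filled by a single $2$-simplex of $X$. Summing these simplices produces a $2$-chain of area $\leq Ck$ whose boundary is $c$.

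The main subtlety I anticipate is translating the van~Kampen-style decomposition into a statement about genuine cellular $2$-chains, so that it provides a homological and not merely a homotopical filling; this requires keeping careful track of orientations and of coincidences between triangles. Once this bookkeeping is carried out, the rest is routine, for the argument depends only on hyperbolicity of the $1$-skeleton of $\Gamma_n^\blacktriangle$ and the presence of $2$-simplices filling its triangles, so neither the torsion in $G$ nor the non-free nature of the $G$-action on $X$ poses any difficulty.
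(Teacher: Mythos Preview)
Your treatment of $FP_{2,\mc F}$ matches the paper's: both simply observe that a cocompact model for $E_{\mc F}G$ exists and deduce the finiteness property, with your version spelling out the Bredon-module translation.

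For the linear bound on $\FA_{G,\mc F}$, however, the paper does not attempt a direct filling argument. It invokes \cite[Theorem~1.7]{MP15}, whose hypotheses are that $\Gamma_n$ is \emph{fine} and hyperbolic with finite edge stabilisers---all of which come from Lemma~\ref{lem:GPgraph}. Your proposal instead tries to extract linearity from hyperbolicity alone via coning to a basepoint, and this is where there is a genuine gap. The construction you describe---geodesics $[x_0,x_i]$ together with the edges $x_ix_{i+1}$---cuts the disc into $k$ geodesic bigons, each of which is $\delta$-thin and therefore fills with a ladder of roughly $|[x_0,x_i]|$ many $2$-simplices. Summing gives on the order of $k\cdot\mathrm{diam}(\gamma)$ simplices, which is $O(k^2)$, not $O(k)$. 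There is no evident cancellation in the homological chain that rescues this: the ladders between $[x_0,x_i],[x_0,x_{i+1}]$ and between $[x_0,x_{i+1}],[x_0,x_{i+2}]$ use disjoint families of $2$-simplices. The reference you give, \cite[Prop.~III.$\Gamma$.3.23]{BrHa99}, is precisely this coning argument, but it proves contractibility, not a linear isoperimetric bound; the linear bound in Bridson--Haefliger is in III.H.2 and uses a different mechanism (iterated shortening of the loop, in the spirit of Dehn's algorithm).

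So either replace the coning by a genuine linear-area decomposition, or---as the paper does---appeal to \cite[Theorem~1.7]{MP15}, where fineness is used to control the combinatorics near infinite-valence cone vertices and the argument is already packaged. The ``main subtlety'' you anticipated (chains versus diagrams) is in fact not the issue; the issue is getting $Ck$ rather than $Ck^2$ in the first place.
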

\begin{proof}
The existence of a cocompact model
$X=\Gamma^\blacktriangle_n$ for $E_{\mc F}(G)$ implies that $G$
is $FP_{2,\mc F}$. Since $X$ has fine and hyperbolic
$1$-skeleton and has finite edge $G$-stabilisers, it follows that $\FA_{G,\mc F}(k):=\FA_X(k)$ has
linear growth by~\cite[Theorem 1.7]{MP15}.
\end{proof}

\noindent \textbf{Organisation}. In Section~\ref{sec:Rips} we
discuss the basic properties of the $n$-Rips complex
$\Gamma_n^\blacktriangle$, and state our main results on the
fixed-point sets, Theorems~\ref{thm:fixpoint}
and~\ref{thm:contractible}. We prove them in
Section~\ref{sec:proofs} using a graph-theoretic notion called
dismantlability. We also rely on a thin triangle
Theorem~\ref{thm:stronglythin} for relatively hyperbolic
groups, which we prove in Section~\ref{sec:triangle}.

\medskip

\noindent \textbf{Acknowledgements.} We thank Damian Osajda for
discussions, and the referees for comments. 
Both authors acknowledge funding by NSERC.
The second author was also partially supported by
National Science Centre DEC-2012/06/A/ST1/00259 and UMO-2015/\-18/\-M/\-ST1/\-00050.

\section{Rips complex}
\label{sec:Rips}

\subsection{Rips graph}

We introduce relatively hyperbolic groups following Bowditch's
approach~\cite{Bo12}. A \emph{circuit} in a graph is an
embedded closed edge-path. A graph is \emph{fine} if for every
edge $e$ and every integer $n$, there are finitely many
circuits of length at most $n$ containing $e$.

Let $G$ be a group, and let $\mc
P=\{P_\lambda\}_{\lambda\in \Lambda}$ be a finite collection of
subgroups of $G$. A \emph{$(G, \mc P)$-graph} is a fine
$\delta$-hyperbolic connected graph with a $G$-action with
finite edge stabilisers, finitely many orbits of edges, and
such that $\mc P$ is a set of representatives of distinct
conjugacy classes of vertex stabilisers such that each infinite
stabiliser is represented.

Suppose $G$ is finitely generated, and let $S$ be a finite generating set.
Let $\Gamma$
denote the Cayley graph of $G$ with respect to $S$. Let $V$
denote the set of vertices of~$\Gamma$, which is in
correspondence with $G$. A \emph{peripheral left coset} is a
subset of $G$ of the form $gP_\lambda$. Let $W$ denote the set
of peripheral left cosets, also called \emph{cone vertices}.
The \emph{coned-off Cayley graph} $\hat \Gamma$ is the
connected graph obtained from $\Gamma$ by enlarging the vertex
set by $W$ and the edge set by the pairs $(v,w)\in V\times W$,
where the group element $v$ lies in the peripheral left coset
$w$.

We say that $G$ is \emph{hyperbolic relative to $\mc P$}
if $\hat \Gamma$ is fine and $\delta$-hyperbolic, which means
that it is a \emph{$(G, \mc P)$-graph}. This is equivalent to the existence of a $(G, \mc P)$-graph. Indeed, if there is a $(G, \mc P)$-graph, a construction of Dahmani~\cite[Page 82, proof of Lemma 4]{Da03} (relying on an argument of Bowditch~\cite[Lemma 4.5]{Bo12}) shows that $\hat \Gamma$ is a $G$-equivariant subgraph of a $(G, \mc P)$-graph $\Delta$, and therefore $\hat \Gamma$ is fine and quasi-isometric to $\Delta$.
In particular the definition of relative hyperbolicity is independent of $S$.  From here on, we assume that $G$ is hyperbolic relative to $\mc P$.

Extend the word metric (which we also call \emph{$S$-distance})
$|\cdot , \cdot|_S$ from $V$ to $V\cup W$ as follows: the
distance between cone vertices is the distance in
$\Gamma$ between the corresponding peripheral left cosets, and
the distance between a cone vertex and an element of $G$ is the
distance between the corresponding peripheral left coset and
the element. Note that $|\cdot,\cdot|_S$ is not a metric on
$V\cup W$. It is only for $v\in V$ that we have the triangle
inequality $|a,b|_S\leq|a,v|_S+|v,b|_S$ for any $a,b\in V\cup
W$.

\begin{definition}
Let $n\geq 1$. The \emph{$n$-Rips graph} $\Gamma_n$ is the
graph with vertex set $V\cup W$ and edges between $u,u'\in
V\cup W$ whenever $|u,u'|_S\leq n$.
 \end{definition}

\begin{lemma}\label{lem:GPgraph}
The $n$-Rips graph $\Gamma_n$ is a $(G, \mc P)$-graph.
\end{lemma}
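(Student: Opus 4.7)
The plan is to verify each of the defining axioms of a $(G, \mc P)$-graph for $\Gamma_n$, in each case bootstrapping from the corresponding property of the coned-off Cayley graph $\hat \Gamma$, which is a $(G, \mc P)$-graph by hypothesis.

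First I would record the basic comparison between the two graphs. Every Cayley edge of $\hat \Gamma$ has $S$-distance $1$ between its endpoints and every cone edge has $S$-distance $0$, so $\hat \Gamma$ is a subgraph of $\Gamma_n$ for all $n \geq 1$. Conversely, any $\Gamma_n$-edge $\{u, u'\}$ lifts to a path of length at most $n + 2$ in $\hat \Gamma$: travel via a cone edge (if $u \in W$) to a representative $a$ in the coset $u$, then along a Cayley geodesic of length $\leq n$ to some $b \in u'$ realising $|u, u'|_S$, then via a cone edge (if $u' \in W$) to $u'$. In particular $\Gamma_n$ is connected and the identity on vertex sets is a $G$-equivariant quasi-isometry $\hat \Gamma \to \Gamma_n$, so $\Gamma_n$ inherits $\delta$-hyperbolicity from $\hat \Gamma$.

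Next I would address the action, stabilisers, and orbits. The $G$-action on $V \cup W$ by left multiplication preserves $|\cdot, \cdot|_S$ and hence preserves $\Gamma_n$. Vertices of $V = G$ have trivial stabilisers and a vertex $gP_\lambda \in W$ has stabiliser $gP_\lambda g^{-1}$, providing the required system of conjugacy class representatives for the infinite vertex stabilisers. For edge stabilisers I case-split on the types of endpoints: an edge meeting $V$ has trivial stabiliser, whereas an edge joining two distinct cone vertices $w \neq w'$ has stabiliser contained in $\mathrm{Stab}(w) \cap \mathrm{Stab}(w')$, a finite subgroup as a standard consequence of fineness of $\hat \Gamma$ (distinct peripheral conjugates intersect in a finite group). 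For finitely many edge orbits, I translate one endpoint to lie in a fixed finite set of orbit representatives, and then use the stabiliser of that representative to move the other endpoint into an $S$-ball of radius $n$ around a fixed point of $V$ modulo the peripheral action, which is finite because $S$ is finite.

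The principal obstacle will be fineness of $\Gamma_n$. My strategy is to lift: any circuit $c = (u_0, u_1, \ldots, u_k = u_0)$ in $\Gamma_n$ of length $k \leq N$ through a fixed edge $e$ produces, via the construction of the first paragraph, a closed walk $\tilde c$ in $\hat \Gamma$ of length at most $N(n + 2)$. Choosing the replacement $\hat\Gamma$-paths $G$-equivariantly over the finitely many edge orbits — with the finite ambiguity per orbit controlled by the finite edge stabilisers — $\tilde c$ starts along one of a prescribed finite list of $\hat\Gamma$-edges out of $u_0$. Since the vertices of $c$ are among those of $\tilde c$, it then suffices to bound the number of possible $\tilde c$. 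The technical difficulty is that $\tilde c$ need not be embedded, so fineness of $\hat \Gamma$ does not apply verbatim; one must combine it with the combinatorial constraint that the underlying $c$ is an embedded circuit in $\Gamma_n$, controlling the simple $\hat\Gamma$-subpaths underlying $\tilde c$ individually by fineness and reassembling. This lifting-and-unfolding step is where I expect the proof to demand the most care.
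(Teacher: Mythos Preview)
Your argument is correct and parallels the paper's for connectedness, hyperbolicity (via the quasi-isometry with $\hat\Gamma$), finite edge stabilisers, and cofiniteness of edge orbits, though the paper streamlines the latter two by citing \cite[Lemma~2.2]{MW11} for the finiteness of the intersection of the stabilisers of any two distinct vertices of $\hat\Gamma$, and by directly counting $G$-orbits of length-$n$ edge-paths in the locally finite graph $\Gamma$. The substantive divergence is fineness: rather than lift circuits to $\hat\Gamma$, the paper simply observes that $\Gamma_n$ is obtained from $\hat\Gamma$ by attaching finitely many $G$-orbits of edges and then invokes Bowditch's result \cite[Lemma~2.3]{Bo12} (see also \cite[Lemma~2.9]{MW11}) that this operation preserves fineness, together with \cite[Lemma~2.7]{MW11} for preservation of the quasi-isometry type. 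Your lifting-and-unfolding strategy is essentially what underlies the proof of Bowditch's lemma, so it can certainly be made to work, but the decomposition of non-embedded closed walks at infinite-valence cone vertices is exactly as delicate as you anticipate; the paper's black-box citation sidesteps this entirely at the cost of being less self-contained. One small correction: the setwise stabiliser of an edge with both endpoints in $V$ can have order~$2$ rather than being trivial, though of course it is still finite.
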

\begin{proof}
Note that the graphs $\hat \Gamma$ and $\Gamma_n$ have the same
vertex set. In particular since $\hat \Gamma$ is connected and
contained in $\Gamma_n$, it follows that $\Gamma_n$ is
connected.

Since $\Gamma$ is locally finite and there are finitely many
$G$-orbits of edges in $\Gamma$, it follows that  there are
finitely many $G$-orbits of edge-paths  of length $n$ in
$\Gamma$. Since $\mc P$ is finite, there are finitely many $G$-orbits of
edges in~$\Gamma_n$.

Since $G$ acts on $\hat \Gamma$ with finite edge stabilisers
and $\hat \Gamma$ is fine, it follows that for distinct
vertices in $V\cup W$, the intersection of their
$G$-stabilisers is finite~\cite[Lemma 2.2]{MW11}. Thus the
pointwise $G$-stabilisers of edges in $\Gamma_n$ are finite,
and hence the same holds for the setwise $G$-stabilisers of
edges.

It remains to show that $\Gamma_n$ is fine and
$\delta$-hyperbolic. Since there are finitely many $G$-orbits
of edges in $\Gamma_n$, the graph $\Gamma_n$ is obtained from
$\hat \Gamma$ by attaching a finite number of $G$-orbits of
edges. This process preserves fineness by a result of
Bowditch~\cite[Lemma 2.3]{Bo12}, see also~\cite[Lemma
2.9]{MW11}. This process also preserves the quasi-isometry
type~\cite[Lemma 2.7]{MW11}, thus $\Gamma_n$ is
$\delta$-hyperbolic.
\end{proof}

For a graph $\Sigma$, let $\Sigma^\blacktriangle$ be the
simplicial complex obtained from $\Sigma$ by spanning simplices
on all cliques. We call $\Gamma_n^\blacktriangle$ the
\emph{$n$-Rips complex}.

\begin{corollary}\label{cor:barycenters}
The $G$-stabiliser of a barycentre of a simplex $\Delta$ in
$\Gamma_n^\blacktriangle$ that is not a vertex is finite.
\end{corollary}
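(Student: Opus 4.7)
The plan is to reduce the claim to a fact already used in the proof of Lemma~\ref{lem:GPgraph}, namely that the intersection of $G$-stabilisers of two distinct vertices of $\Gamma_n$ is finite.

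First I would observe that an element $g\in G$ fixes the barycentre of $\Delta$ if and only if $g$ preserves $\Delta$ setwise, equivalently, permutes the vertex set $V(\Delta)$. Thus the $G$-stabiliser of the barycentre equals the setwise $G$-stabiliser $G_\Delta$ of $V(\Delta)$. Since $V(\Delta)$ is finite, the natural homomorphism $G_\Delta\to \mathrm{Sym}(V(\Delta))$ has finite image, so it suffices to show that its kernel, the pointwise $G$-stabiliser of $V(\Delta)$, is finite.

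Next, since $\Delta$ is not a vertex, we may choose two distinct vertices $u,u'\in V(\Delta)\subseteq V\cup W$. The pointwise stabiliser of $V(\Delta)$ is contained in $G_u\cap G_{u'}$. The proof of Lemma~\ref{lem:GPgraph} recalls (from~\cite[Lemma 2.2]{MW11}) that for a fine graph with finite edge stabilisers, the intersection of the $G$-stabilisers of any two distinct vertices is finite; applying this to $\Gamma_n$, which is a $(G,\mc P)$-graph by Lemma~\ref{lem:GPgraph}, we conclude that $G_u\cap G_{u'}$ is finite. Combined with the previous paragraph, this gives that $G_\Delta$ is finite, as required.

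There is no real obstacle here: the corollary is a direct consequence of Lemma~\ref{lem:GPgraph} once one notes that setwise stabilisers of finite vertex sets are finite extensions of pointwise stabilisers, and applies the standard fineness argument. The only thing worth being careful about is that, strictly speaking, $V\cup W$ is not a metric space under $|\cdot,\cdot|_S$, so one should phrase the argument at the level of vertex stabilisers in the graph $\Gamma_n$ rather than through any ``distance'' interpretation.
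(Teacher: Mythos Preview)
Your proof is correct and follows essentially the same route as the paper: both observe that the barycentre stabiliser contains the pointwise stabiliser of $\Delta$ as a finite-index subgroup, and then bound the latter by an edge stabiliser in $\Gamma_n$, which is finite by Lemma~\ref{lem:GPgraph}. The only cosmetic difference is that the paper appeals directly to finiteness of edge stabilisers (since your $u,u'$ are vertices of a simplex and hence adjacent), whereas you route through the more general statement from \cite[Lemma 2.2]{MW11} about intersections of distinct vertex stabilisers; either way the conclusion is immediate.
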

\begin{proof}
Let $F$ be the stabiliser of the barycentre of $\Delta$. Then
$F$ contains the pointwise stabiliser of $\Delta$ as a finite
index subgroup. The latter is contained in the pointwise
stabiliser of an edge of $\Delta$, which is finite by
Lemma~\ref{lem:GPgraph}. Therefore $F$ is finite.
\end{proof}

\begin{corollary}\label{cor:cocompact}
The $G$-action on $\Gamma_n^\blacktriangle$ is cocompact.
\end{corollary}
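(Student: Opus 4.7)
My plan is to show that $\Gamma_n^\blacktriangle$ has only finitely many $G$-orbits of simplices, which suffices for cocompactness since each closed simplex is compact. Because the simplices of $\Gamma_n^\blacktriangle$ are by construction in bijection with the cliques of $\Gamma_n$, it is enough to bound the number of $G$-orbits of cliques in $\Gamma_n$.

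By Lemma~\ref{lem:GPgraph}, $\Gamma_n$ is a $(G,\mc P)$-graph, so in particular it has finitely many $G$-orbits of edges, and since it is connected this forces finitely many $G$-orbits of vertices as well (every vertex is an endpoint of some edge, so the number of vertex orbits is at most twice the number of edge orbits). It thus remains to bound the orbits of cliques of size at least two. The key observation is that in a fine graph, every edge $e=\{u,v\}$ lies in only finitely many triangles, since each such triangle is a circuit of length three through $e$; equivalently, the set of common neighbours of $u$ and $v$ in $\Gamma_n$ is finite.

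Given any clique $C$ containing $e$, every vertex of $C\setminus\{u,v\}$ is a common neighbour of $u$ and $v$, so $C$ is contained in a finite vertex set depending only on $e$. Consequently only finitely many cliques contain $e$. Fixing a finite set of representatives of the $G$-orbits of edges, every clique of size at least two is a $G$-translate of a clique containing one of these representatives, so there are only finitely many $G$-orbits of such cliques. Combined with the finiteness of vertex orbits, this yields finitely many $G$-orbits of simplices in $\Gamma_n^\blacktriangle$, as desired.

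I do not anticipate any serious obstacle: the argument is a routine consequence of fineness, which is built into the hypothesis verified in Lemma~\ref{lem:GPgraph}. The only mildly subtle point is the passage from finiteness of triangles through $e$ to finiteness of cliques containing $e$, but this is immediate once one notes that cliques containing $e$ are determined by the finite set of common neighbours of its endpoints.
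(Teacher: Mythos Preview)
Your proof is correct and follows essentially the same route as the paper's: use Lemma~\ref{lem:GPgraph} to get that $\Gamma_n$ is fine with finitely many $G$-orbits of edges, observe that fineness forces each edge to lie in only finitely many length-$3$ circuits (equivalently, finitely many cliques), and conclude that there are finitely many $G$-orbits of simplices. Your version is in fact slightly more explicit than the paper's, since you spell out the common-neighbours argument and treat the vertex orbits separately.
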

\begin{proof}
By Lemma~\ref{lem:GPgraph}, the $n$-Rips graph $\Gamma_n$ is
fine. Hence every edge~$e$ in $\Gamma_n$ is contained in
finitely many circuits of length $3$. Thus $e$ is contained in
finitely many simplices of $\Gamma^\blacktriangle_n$. By
Lemma~\ref{lem:GPgraph}, there are finitely many $G$-orbits of
edges in $\Gamma_n$.  It follows that there are finitely many
$G$-orbits of simplices in $\Gamma^\blacktriangle_n$.
\end{proof}

\subsection{Fixed-point sets}

The first step of the proof of Theorem~\ref{thm:MMain} is the
following fixed-point theorem.

\begin{theorem}\label{thm:fixpoint} For sufficiently large $n$, each finite subgroup
$F\leq G$ fixes a clique of $\Gamma_n$.
\end{theorem}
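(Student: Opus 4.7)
\emph{Approach.} The plan is to show that $\Gamma_n$ is \emph{dismantlable} for $n$ sufficiently large, and then to invoke the classical principle that every group acting by automorphisms on a dismantlable graph fixes a clique (due to Quilliot in the finite case and Polat~\cite{Po93} in general). Dismantlability of $\Gamma_n$ is the central structural property of the paper; it will simultaneously yield both Theorem~\ref{thm:fixpoint} and the contractibility statement of Theorem~\ref{thm:contractible}.

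\emph{Key steps.} First, fix a basepoint $o\in V$ and well-order the vertex set of $\Gamma_n$ so that vertices lying further from $o$ (in the graph metric of $\Gamma_n$) come earlier, breaking ties in a $G$-invariant manner. The core task is to verify the following domination property: for each vertex $v\neq o$, there exists a ``closer'' vertex $v'$---namely the penultimate vertex on some $\Gamma_n$-geodesic from $v$ to $o$---such that every $u$ with $|u,v|_S\leq n$ and $d_{\Gamma_n}(u,o)\geq d_{\Gamma_n}(v,o)$ also satisfies $|u,v'|_S\leq n$. This is precisely the analogue, in the relatively hyperbolic setting, of the domination used in~\cite{HOP14} for ordinary hyperbolic groups. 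Second, once dismantlability is established, the Polat-type fixed-point theorem applies directly to any finite subgroup $F\leq G$ acting on $\Gamma_n$ and produces an $F$-invariant clique.

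\emph{Main obstacle.} The principal difficulty is verifying domination in the presence of cone vertices. For the Cayley graph of an ordinary hyperbolic group, domination is a routine consequence of the standard thin triangle condition applied to the triangle with vertices $u$, $v$, $o$, as in~\cite[Prop III.$\Gamma$ 3.23]{BrHa99}. In our setting, however, $\hat{\Gamma}$-geodesics may pass through cone vertices in $W$, and $\hat{\Gamma}$-distance does not directly bound $S$-distance, so ordinary $\delta$-hyperbolicity of $\hat{\Gamma}$ is not enough to guarantee $|u,v'|_S\leq n$. This is exactly why the strong thin triangle theorem (Theorem~\ref{thm:stronglythin}) is needed: it provides $S$-distance control on triangles in the relatively hyperbolic setting. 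Granted that control, together with the fineness of $\hat{\Gamma}$ to manage cone vertices along the sides, one can choose $n$ large enough so that the domination inequality holds for every $v$, and Theorem~\ref{thm:fixpoint} then follows from the general dismantlability fixed-point principle.
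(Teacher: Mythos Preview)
Your overall strategy—prove that $\Gamma_n$ is dismantlable and then invoke Polat's fixed-clique theorem—is not how the paper proves Theorem~\ref{thm:fixpoint}, and as written it has a genuine gap. The paper's argument is shorter and more direct: it takes a finite $F$-orbit $U\subset V\cup W$ and shows, using the thin triangle Theorem~\ref{thm:stronglythin}, that the \emph{quasi-centre} of $U$ (the set of $z$ with $|z,u|_S\leq \rho(U)$ for all $u\in U$) has $S$-diameter at most $4D$ (Lemma~\ref{lem:quasicentre}). Since the quasi-centre is $F$-invariant, for $n\geq 4D$ it spans an $F$-fixed clique. Dismantlability plays no role here; in the paper it is used only for the contractibility statement (Theorem~\ref{thm:contractible}), and even there only for certain \emph{finite} convex subgraphs (Lemma~\ref{lem:dismantlable}), never for $\Gamma_n$ itself.

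As for your sketch, several ingredients are off. First, your domination condition has the inequality reversed: if vertices far from $o$ are removed first, then when you reach $v$ the surviving neighbours $u$ are those with $d_{\Gamma_n}(u,o)\leq d_{\Gamma_n}(v,o)$, not $\geq$. Second, ordering by the graph metric $d_{\Gamma_n}$ rather than by $|\cdot,o|_S$ loses the control you need; the paper's domination argument (Lemma~\ref{lem:dismantlable}) orders by $|\cdot,b|_S$ and chooses the dominating vertex $z$ not as a ``penultimate'' vertex on a $\Gamma_n$-geodesic but as either the cone vertex $w$ in which $p^{ab}_{6D}$ is deep or the vertex $p^{ab}_{6D}$ itself—this dichotomy is exactly what makes Theorem~\ref{thm:stronglythin} applicable. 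Third, there is no $G$-invariant well-order on an infinite $G$-set with finite stabilisers, so ``breaking ties in a $G$-invariant manner'' cannot be done. Finally, even granting all repairs, you would still need Polat's theorem for an infinite, locally infinite graph (cone vertices have infinite valence), which requires care. The quasi-centre argument sidesteps all of this.
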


The proof will be given in Section~\ref{sub:quasi}. As a
consequence we obtain the following.

\begin{proof}[Proof of Corollary~\ref{sec:subgroups}]
By Corollary~\ref{cor:cocompact}, there are finitely many
$G$-orbits of simplices in~$\Gamma_n^\blacktriangle$. From each
orbit of simplices that are not vertices pick a
simplex~$\Delta_i$, and let $F_i$ be the stabiliser of its
barycentre. By Corollary~\ref{cor:barycenters}, the group $F_i$
is finite.

Choose $n$ satisfying Theorem~\ref{thm:fixpoint}. Then any
finite subgroup $F$ of $G$ fixes the barycentre of a simplex
$\Delta$ in $\Gamma^\blacktriangle_n$. If $\Delta$ is a vertex,
then $F$ is contained in a conjugate of some $P_\lambda$.
Otherwise, $F$ is contained in a conjugate of some $F_i$.
\end{proof}

It was observed by the referee that if one proved in advance Corollary~\ref{sec:subgroups}, one could deduce from it Theorem~\ref{thm:fixpoint} (without control on~$n$).

The second step of the proof of Theorem~\ref{thm:MMain} is the
following, whose proof we also postpone, to
Section~\ref{sub:contr}.

\begin{theorem}
\label{thm:contractible} For sufficiently large $n$, for any subgroup
$H \leq G$, its fixed-point set in $\Gamma_n^\blacktriangle$ is
either empty or contractible.
\end{theorem}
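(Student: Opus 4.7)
The plan is to show that $(\Gamma_n^\blacktriangle)^H$, when nonempty, is contractible by proving that the subgraph $\Gamma_n^H$ of $\Gamma_n$ induced on the $H$-fixed vertex set is dismantlable. Since the clique complex of a dismantlable graph is contractible, and since $(\Gamma_n^\blacktriangle)^H$ can be identified (after passing to a barycentric subdivision to ensure $H$ acts without inversions) with the clique complex of $\Gamma_n^H$, contractibility will follow. This mirrors the strategy used for hyperbolic groups in \cite{HOP14}, but the combinatorics must be adapted to the Rips graph of $\Gamma_n$, which has both group-element vertices and cone vertices.

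To dismantle $\Gamma_n^H$, I would fix an $H$-fixed basepoint $x_0 \in V(\Gamma_n^H)$ and linearly order the remaining $H$-fixed vertices by decreasing $S$-distance from $x_0$. For each such vertex $v$, the goal is to construct a dominator $u(v) \in V(\Gamma_n^H)$ strictly closer to $x_0$ whose closed $\Gamma_n$-neighborhood contains that of $v$. The dominator is produced by moving one step toward $x_0$ along a geodesic of $\Gamma$ between $v$ and $x_0$. The required neighborhood containment is then a standard Rips-type calculation fed by Theorem~\ref{thm:stronglythin}: for any $w$ with $|v,w|_S \leq n$, the strongly thin triangle on $(x_0, v, w)$ forces $u(v)$ to fellow-travel a vertex on the $x_0$--$w$ geodesic, giving $|u(v), w|_S \leq n$ once $n$ exceeds a constant depending only on the hyperbolicity and fineness constants of $\hat\Gamma$.

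The main obstacle is equivariance: when $v$ is $H$-fixed, the chosen $u(v)$ must itself be $H$-fixed. Since geodesics in $\Gamma$ need not be unique, and $H$ may permute them, one cannot naively select a single geodesic to move along. The remedy is to build $u(v)$ canonically from an $H$-invariant finite set of candidate vertices --- for instance, all $S$-distance-one neighbors of $v$ that lie strictly closer to $x_0$, or more robustly, all vertices $u$ within a bounded $S$-distance of $v$ that witness the neighborhood containment. Fineness of $\Gamma_n$ (Lemma~\ref{lem:GPgraph}) ensures that this candidate set is finite, and an $H$-fixed element can then be extracted either by a canonical combinatorial tiebreak or, more conceptually, by applying Theorem~\ref{thm:fixpoint} to $H$ acting on the induced subgraph of candidates.

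Finally, I would verify that the same construction handles the case where $H$ is infinite: if $(\Gamma_n^\blacktriangle)^H$ is nonempty then $H$ fixes some vertex of $V \cup W$, which serves as the basepoint $x_0$, and the dismantling proceeds identically. The only case-specific subtlety is when $x_0 \in W$ is a cone vertex and $H$ is contained in the corresponding parabolic; here the star of $x_0$ in $\Gamma_n$ may be very large, but the thin triangle argument still supplies dominators, and the dismantling is well-defined. The core technical work is thus packaged entirely into the equivariant selection of $u(v)$ together with the neighborhood containment estimate provided by Theorem~\ref{thm:stronglythin}.
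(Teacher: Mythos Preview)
Your plan has a genuine gap at its very first step: the fixed-point set $(\Gamma_n^\blacktriangle)^H$ is \emph{not} the clique complex of the subgraph $\Gamma_n^H$ induced on $H$-fixed vertices, even after barycentric subdivision. The set $(\Gamma_n^\blacktriangle)^H$ contains the barycentre of every simplex that $H$ stabilises setwise, and such a simplex need have no $H$-fixed vertex at all. Concretely, the $G$-action on $V$ is free, so a finite $H\leq G$ that is not conjugate into any $P_\lambda$ fixes no vertex of $V\cup W$ whatsoever; Theorem~\ref{thm:fixpoint} only promises an $H$-invariant \emph{clique}, whose barycentre lies in $(\Gamma_n^\blacktriangle)^H$. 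In that situation your basepoint $x_0\in V(\Gamma_n^H)$ does not exist, $\Gamma_n^H$ is empty, and the dismantling argument never starts, yet $(\Gamma_n^\blacktriangle)^H$ may well be a large complex whose contractibility still has to be proved. The same obstruction kills the proposed equivariant choice of $u(v)$: a finite $H$-invariant set of candidate dominators need not contain an $H$-fixed element, and invoking Theorem~\ref{thm:fixpoint} again yields only an invariant clique, not a fixed vertex. This is also a misreading of \cite{HOP14}: there one does not dismantle the $H$-fixed subgraph, but rather shows the ambient (finite) graph is dismantlable and then appeals to the black-box result that for any finite dismantlable graph and any group of automorphisms the fixed-point set in the clique complex is contractible.

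The paper's proof follows exactly that template. One first locates an $H$-invariant clique $\Delta$ of $S$-diameter $\leq 4D$ that is not a single cone vertex (Steps~1--2, via quasi-centres). Its $r$-hulls $\Delta_r$ are finite $H$-invariant subsets exhausting $V\cup W$; Lemma~\ref{lem:hull} makes each $\Delta_r$ $6D$-convex with respect to every vertex of $\Delta$, and Lemma~\ref{lem:dismantlable} then dismantles the induced subgraph by ordering vertices by $|\cdot,b|_S$ for any $b\in\Delta$, using Theorem~\ref{thm:stronglythin} to produce dominators. Crucially, $b$ need not be $H$-fixed: dismantlability is a property of the graph, not of the action, and Theorem~\ref{thm:Damian} converts it into contractibility of $(\Delta_r^\blacktriangle)^H$ for every $H$. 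Passing to the direct limit over $r$ gives contractibility of $(\Gamma_n^\blacktriangle)^H$. Your thin-triangle domination estimate is essentially the content of Lemma~\ref{lem:dismantlable}; what your outline is missing is the realisation that equivariance must be offloaded to Theorem~\ref{thm:Damian} rather than built into the dismantling itself.
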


We conclude with the following.

\begin{proof}[Proof of Theorem~\ref{thm:MMain}]
The point stabilisers of $\Gamma_n^\blacktriangle$ belong to $\mc
F$ by Corollary~\ref{cor:barycenters}. For every $H\in \mc F$
its fixed-point set $(\Gamma_n^\blacktriangle)^H$ is nonempty
by Theorem~\ref{thm:fixpoint}. Consequently,
$(\Gamma_n^\blacktriangle)^H$ is contractible by
Theorem~\ref{thm:contractible}.
 \end{proof}

\section{Dismantlability}
\label{sec:proofs}

The goal of this section is to prove
Theorems~\ref{thm:fixpoint} and~\ref{thm:contractible}, relying
on the following.

\subsection{Thin triangle theorem}

We state an essential technical result of the article, a thin
triangles result for relatively-hyperbolic groups. We keep the notation from the Introduction, where $\Gamma$ is the Cayley graph of $G$ with respect to $S$ etc. By \emph{geodesics} we mean geodesic edge-paths.

\begin{definition}\cite[Definition 8.9]{HK08}
Let $p=(p_j)_{j=0}^\ell$ be a geodesic in $\Gamma$, and let
$\epsilon, R$ be positive integers. A vertex $p_i$ of $p$ is
\emph{$(\epsilon, R)$-deep} in the peripheral left coset $w\in
W$ if $R\leq i \leq \ell- R$ and  $|p_j,  w|_S \leq \epsilon$
for all $|j-i|\leq R$. If there is no such $w\in W$, then $p_i$
is an \emph{$(\epsilon, R)$-transition vertex} of~$p$.
\end{definition}

\begin{lemma}
\label{rem:atmostone} \label{lem:uniqueness}
 \cite[Lemma~8.10]{HK08}
For each $\epsilon$ there is a constant $R$ such that for any
geodesic $p$ in $\Gamma$, a vertex of $p$ cannot be $(\epsilon,
R)$-deep in two distinct peripheral left cosets.
\end{lemma}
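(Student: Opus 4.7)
The plan is to exploit fineness of the coned-off Cayley graph $\hat \Gamma$ together with cocompactness of the $G$-action on $\hat \Gamma$. Suppose, for contradiction, that a vertex $p_i$ of $p$ is $(\epsilon, R)$-deep in two distinct peripheral left cosets $w, w' \in W$. Then for every $j$ with $|j-i| \leq R$ the inequalities $|p_j, w|_S \leq \epsilon$ and $|p_j, w'|_S \leq \epsilon$ translate, by prepending a single cone edge, into paths in $\hat \Gamma$ of length at most $\epsilon + 1$ from $p_j$ to each of $w$ and $w'$. Thus the $2R+1$ distinct vertices $p_{i-R}, \ldots, p_{i+R}$ are all common $(\epsilon+1)$-neighbours of the pair $(w, w')$ in $\hat \Gamma$. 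It would then suffice to produce a constant $N(\epsilon)$, depending only on $\epsilon$, bounding the number of such common neighbours for every distinct pair in $W$; then choosing any $R$ with $2R+1 > N(\epsilon)$ yields the contradiction.

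Next I would fix distinct $w, w' \in W$ admitting at least one common $(\epsilon+1)$-neighbour, so that the triangle inequality in $\hat \Gamma$ forces their distance to be at most $2\epsilon + 2$. Pick a geodesic $\gamma$ in $\hat \Gamma$ from $w$ to $w'$ and any edge $e$ of $\gamma$. For each common $(\epsilon+1)$-neighbour $v$, concatenating a shortest $(w \to v)$-path, a shortest $(v \to w')$-path, and the reverse of $\gamma$ produces a closed walk of length at most $4\epsilon + 4$ passing through $e$; reducing to an embedded subcircuit through $e$ yields a circuit of length at most $4\epsilon + 4$ which, with some care, still records $v$. Fineness of $\hat \Gamma$ bounds the number of such circuits through the fixed edge $e$, and each has at most $4\epsilon + 4$ vertices, producing a pair-dependent bound on the number of common $(\epsilon+1)$-neighbours.

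To make this bound uniform in the pair, I would invoke that $\hat \Gamma$ is a $(G, \mc P)$-graph, so it has finitely many $G$-orbits of edges and hence finitely many $G$-orbits of ordered vertex pairs at $\hat \Gamma$-distance at most $2\epsilon + 2$. Since the common-neighbour count is a $G$-invariant function on such orbits, taking the maximum over a system of representatives yields the desired constant $N(\epsilon)$. The main obstacle will be the middle paragraph: verifying that the walk-reduction step genuinely preserves the record of $v$, and dealing with the degenerate case in which $v$ already lies on $\gamma$ (e.g.\ by adjusting the auxiliary edge $e$ along $\gamma$, or by routing the bookkeeping through a cone edge incident to $v$ instead). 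Once that is handled, coupling fineness (which gives a pointwise finite bound) with cocompactness (which promotes it to a uniform one) delivers the constant $R$.
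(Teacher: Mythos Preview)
The paper does not give its own proof of this lemma; it is quoted directly from Hruska \cite[Lemma~8.10]{HK08}. So there is no argument in the paper to compare against, and your proposal should be read as an attempt to supply an independent proof.

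Your strategy---pass to the coned-off Cayley graph $\hat\Gamma$, exploit fineness to get a pairwise bound, then use cocompactness for uniformity---is sound in outline, and the reduction to bounding the number of common $(\epsilon+1)$-neighbours of a pair $w\neq w'$ in $W$ is correct. However, the step you yourself flag is a genuine gap, and the fixes you sketch do not quite close it. Reducing the closed walk $\alpha\cdot\beta\cdot\gamma^{-1}$ to an embedded circuit through a chosen edge $e$ of $\gamma$ need not produce a circuit containing $v$: the sub-arcs $\alpha$ and $\beta$ can overlap away from $v$, and the reduction can shortcut $v$ entirely. Adjusting $e$ along $\gamma$ does not help with this, and ``routing through a cone edge incident to $v$'' presupposes $v$ lies in some peripheral coset, which it need not.

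Here is a clean repair. Choose the path $\alpha$ from $w$ to $v$ to be of the specific form $w-u-\cdots-v$, where the first edge is the cone edge to some $u\in w$ with $|u,v|_S\leq\epsilon$, and the remainder is a $\Gamma$-geodesic; choose $\beta$ analogously, ending $\cdots-u'-w'$. Then the concatenated walk $w-u-\cdots-v-\cdots-u'-w'$ has $w$ and $w'$ occurring only at the ends (the interior lies in $V$), so \emph{any} arc from $w$ to $w'$ contained in this walk must begin with the edge $(w,u)$. By the standard characterisation of fineness, there are only finitely many arcs of length $\leq 2\epsilon+2$ from $w$ to $w'$, hence finitely many possible values of $u$. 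Since $v$ lies in the $\epsilon$-ball about $u$ in the locally finite graph $\Gamma$, this bounds the number of $v$'s. For uniformity your cocompactness argument then works: translating a common neighbour $v$ to the identity, each of $w,w'$ becomes a coset $hP_\lambda$ with $|h|_S\leq\epsilon$, so there are only finitely many $G$-orbits of relevant pairs $(w,w')$.
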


\begin{definition} For $a,b \in V\cup W$, a
\emph{geodesic from $a$ to $b$ in $\Gamma$} is a geodesic in
$\Gamma$ of length $|a,b|_S$ such that its initial vertex
equals $a$ if $a\in V$, or is an element of $a$ if $a\in W$,
and its terminal vertex equals $b$ if $b\in V$, or is an
element of $b$ if $b\in W$.
\end{definition}

Throughout the article we adopt the following convention. For
an edge-path $(p_j)_{j=0}^\ell$, if $i>\ell$ then $p_i$ denotes
$p_\ell$.

\begin{theorem}[Thin triangle theorem]\label{thm:stronglythin}
There are positive integers $\epsilon, R$ and~$D$, satisfying
Lemma~\ref{rem:atmostone}, such that the following holds. Let
$a,b,c\in V\cup W$ with $a\neq b$, and let $p^{ab}, p^{bc},
p^{ac}$ be geodesics in $\Gamma$ from $a$ to $b$, from $b$ to
$c$, and from $a$ to $c$, respectively. Let $\ell=|a,b|_S$ and
let $0\leq i \leq \ell$.

If $p^{ab}_i$ is an $(\epsilon, R)$-deep vertex of $p^{ab}$ in
the peripheral left coset $w$, then let $z=w$, otherwise let
$z=p^{ab}_i$.

Then $|z, p^{ac}_i|_S\leq D$ or $|z, p^{bc}_{\ell-i}|_S \leq
D$.
\end{theorem}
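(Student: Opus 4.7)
The overall plan is to reduce the statement to the $\delta$-hyperbolicity of the coned-off Cayley graph $\hat\Gamma$, exploiting the general principle that $\Gamma$-geodesics project to controlled quasi-geodesics in $\hat\Gamma$.

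I would first fix $\epsilon$ and a corresponding $R$ satisfying Lemma~\ref{lem:uniqueness}. To each $\Gamma$-geodesic $p=p^{ab}$ I associate an edge-path $\hat p$ in $\hat\Gamma$ by retaining the non-deep portions of $p$ verbatim and replacing each maximal $(\epsilon, R)$-deep subpath $p_j,\ldots,p_k$ in a peripheral coset $w$ by the length-$2$ path $p_j, w, p_k$ through the cone vertex $w$. The coset $w$ is unique to its deep subpath by Lemma~\ref{lem:uniqueness}, so $\hat p$ is well-defined. A standard argument in the theory of relatively hyperbolic groups (in the spirit of~\cite{HK08}) shows that $\hat p$ is a $(K,C)$-quasi-geodesic in $\hat\Gamma$ for uniform constants $K,C$ depending only on $\epsilon$ and $R$.

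Next I would apply $\delta$-hyperbolicity of $\hat\Gamma$ to the quasi-geodesic triangle with sides $\hat p^{ab}, \hat p^{bc}, \hat p^{ac}$: by the Morse lemma in a $\delta$-hyperbolic graph, the triangle is uniformly slim, so the vertex of $\hat p^{ab}$ representing $z$ (which is $p^{ab}_i$ itself when it is a transition vertex, or the cone vertex $w$ when $p^{ab}_i$ is $(\epsilon, R)$-deep in $w$) lies within a constant $\delta_1$ in $\hat\Gamma$ from some vertex $u$ on $\hat p^{ac}\cup\hat p^{bc}$. To translate this $\hat\Gamma$-closeness into an $S$-distance bound, I would use two standard forms of Bounded Coset Penetration for $\Gamma$-geodesics: first, transition vertices of distinct $\Gamma$-geodesics that are close in $\hat\Gamma$ stay at bounded $S$-distance; second, if one vertex of the close pair in $\hat\Gamma$ is a cone vertex, the other lies at bounded $S$-distance from the associated peripheral coset. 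Together with $|p^{ab}_i, w|_S\leq\epsilon$ for deep vertices, these yield a uniform bound on $|z, u|_S$ for some vertex $u$ in $V$ lying on $p^{ac}$ or on $p^{bc}$.

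The main obstacle is to arrange that the matching vertex $u$ is precisely $p^{ac}_i$ or $p^{bc}_{\ell-i}$, rather than some other vertex on these geodesics. Once the preceding $S$-distance bound is available, this reduces to a short length comparison. Since $p^{ab}$ and $p^{ac}$ are geodesics ending at $b$ and $c$ respectively, one computes directly that $|a, p^{ab}_i|_S=i$ and $|a, p^{ac}_{i'}|_S=i'$ for any $u=p^{ac}_{i'}$; the triangle inequality (which holds at $S$-distance through an intermediate vertex in $V$) then gives $|i-i'|\leq|p^{ab}_i, p^{ac}_{i'}|_S$, and the identity $|p^{ac}_{i'}, p^{ac}_i|_S=|i-i'|$ on the geodesic $p^{ac}$ lets one replace $p^{ac}_{i'}$ by $p^{ac}_i$ at the cost of a bounded additive error. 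An analogous comparison anchored at $b$ handles the $p^{bc}$ case, matching $p^{ab}_i$ with $p^{bc}_{\ell-i}$. Absorbing these bounded index shifts into $D$ completes the proof.
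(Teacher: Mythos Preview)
Your index-shift argument breaks in the deep case. Suppose $p^{ab}_i$ is $(\epsilon,R)$-deep in $w$, so $z=w$. Your BCP step produces a vertex $u=p^{ac}_{i'}$ with $|w,p^{ac}_{i'}|_S$ bounded, and you then want $|i-i'|$ bounded. The inequality $|i-i'|\leq |p^{ab}_i,p^{ac}_{i'}|_S$ is correct, but you have no bound on $|p^{ab}_i,p^{ac}_{i'}|_S$: you only know $|p^{ab}_i,w|_S\leq\epsilon$ and $|w,p^{ac}_{i'}|_S\leq D_0$, and (as the paper emphasises) the triangle inequality for $|\cdot,\cdot|_S$ fails through points of $W$. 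Concretely, the nearest point of $w$ to $p^{ab}_i$ and the nearest point of $w$ to $p^{ac}_{i'}$ can be at arbitrarily large $S$-distance within the coset $w$, so $|p^{ab}_i,p^{ac}_{i'}|_S$ is genuinely uncontrolled. Thus knowing that $p^{ac}$ passes near $w$ \emph{somewhere} does not tell you it passes near $w$ at the \emph{correct index} $i$.

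This is exactly the difficulty the paper's proof is built to overcome. Instead of a single BCP step, the paper works in $\bar\Gamma$ and tracks the $w$-edges of the $\bar\Gamma$-geodesics via Osin's $n$-gon theorem (Theorem~\ref{thm:n-gon}): it shows that $q^{ab}$ has a $w$-edge $5K^2$-similar to the deep segment $p^{ab}[j,k]$, forces a $w$-component on $q^{ac}$ or $q^{bc}$, pins down the corresponding subpath $p^{ac}[\alpha,\beta]$ (or $p^{bc}[\gamma,\delta]$), and then uses quasiconvexity of $w$ (Lemma~\ref{lem:quasiconvexity}) together with an index-interval comparison (Step~\ref{lem:final}) to place $i$ within a bounded shift of $[\alpha,\beta]$ or $\ell-i$ within a bounded shift of $[\gamma,\delta]$. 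The anchoring at $a$ and $b$ that you invoke also needs the estimate $|p^{ab}_-,p^{ac}_-|_S\leq 10K$ of inequality~\eqref{eq:02}, which again comes from the $n$-gon theorem and not from a generic BCP statement. Your transition-vertex case is fine in outline (and close to Step~\ref{lem:triangle2}), but the deep case needs this extra structure; a bare slimness-plus-BCP argument does not supply it.
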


Note that the condition $a\neq b$ is necessary, since for $a=b\in W$ we could take for $p^{ab}$ any element of $a$, leading to counterexamples.

While Theorem~\ref{thm:stronglythin} seems similar to various
other triangle theorems in relatively hyperbolic groups, its proof
is surprisingly involved, given that we rely on these previous results.
We postpone the proof till Section~\ref{sec:triangle}. In the remaining part of the section, $\epsilon, R,D$ are the integers guaranteed by Theorem~\ref{thm:stronglythin}. We can
and will assume that $D\geq \epsilon$.

\subsection{Quasi-centres}
\label{sub:quasi}

In this subsection we show how to deduce
Theorem~\ref{thm:fixpoint} from thin triangle
Theorem~\ref{thm:stronglythin}. This is done analogously as for
hyperbolic groups, using quasi-centres (see
\cite[Lemma~III.$\Gamma$.3.3]{BrHa99}).

\begin{definition}
Let $U$ be a finite subset of $V\cup W$. The \emph{radius}
$\rho(U)$ of~$U$ is the smallest $\rho$ such that there exists
$z\in V\cup W$ with $|z,u|_S\leq \rho$ for all $u\in U$. The
\emph{quasi-centre} of $U$ consists of $z\in V\cup W$ satisfying
$|z,u|_S\leq \rho(U)$ for all $u\in U$.
\end{definition}

\begin{lemma}
\label{lem:quasicentre} Let $U$ be a finite subset of $V\cup W$
that is not a single vertex of $W$. Then for any two elements
$a,b$ of the quasi-centre of~$U$, we have $|a,b|_S\leq 4D$.
\end{lemma}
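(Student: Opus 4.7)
The plan is to mimic the standard quasi-centre argument for $\delta$-hyperbolic groups~\cite[Lemma~III.$\Gamma$.3.3]{BrHa99}, with the relative thin-triangle result Theorem~\ref{thm:stronglythin} taking the place of the usual thin-triangle property. I argue by contradiction, supposing that $\ell := |a, b|_S > 4D$. Since $a \neq b$, I can fix a geodesic $p^{ab}$ in $\Gamma$ from $a$ to $b$, and set $i = \lfloor \ell / 2 \rfloor$, so that both $i$ and $\ell - i$ are at least $2D$. I let $z \in V \cup W$ be defined as in the statement of Theorem~\ref{thm:stronglythin}: namely, $z = w$ when $p^{ab}_i$ is $(\epsilon, R)$-deep in some peripheral coset $w$, and $z = p^{ab}_i$ otherwise. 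The point $z$ will serve as a candidate centre of strictly smaller radius than $\rho := \rho(U)$.

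For each $u \in U$, I would choose geodesics $p^{au}$ and $p^{bu}$ in $\Gamma$, and apply Theorem~\ref{thm:stronglythin} to the triangle $(a, b, u)$, obtaining $|z, p^{au}_i|_S \leq D$ or $|z, p^{bu}_{\ell - i}|_S \leq D$. Because the geodesic $p^{au}$ has length $|a, u|_S \leq \rho$, the convention $p^{au}_j = p^{au}_{|a, u|_S}$ for $j > |a, u|_S$ guarantees $|p^{au}_i, u|_S \leq \max\{0, \rho - i\} \leq \max\{0, \rho - 2D\}$, and the symmetric bound holds for $p^{bu}_{\ell - i}$. Applying the triangle inequality through the $V$-vertex $p^{au}_i$ or $p^{bu}_{\ell - i}$ then yields, in either case,
\[
|z, u|_S \;\leq\; D + \max\{0, \rho - 2D\} \;=\; \max\{D,\, \rho - D\}.
\]
When $\rho \geq D + 1$, the right-hand side is at most $\rho - 1$, contradicting the minimality of $\rho(U) = \rho$, since $z$ would be a centre with strictly smaller radius. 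Consequently $\rho \leq D$, so that $|a, u|_S \leq D$ and $|b, u|_S \leq D$ for every $u \in U$.

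Finally, I invoke the hypothesis that $U$ is not a single vertex of $W$. If $U$ meets $V$ in some $v$, the triangle inequality through the $V$-vertex $v$ gives $|a, b|_S \leq |a, v|_S + |v, b|_S \leq 2D$, a contradiction. Otherwise $U \subseteq W$ and contains at least two distinct cone vertices $u, u'$, and both $a$ and $b$ lie within $D$ of each of them in the $S$-distance. I expect this last subcase to be the main obstacle: the triangle inequality in $|\cdot, \cdot|_S$ cannot pass through cone vertices, so to confine $a$ and $b$ to a set of $S$-diameter at most $4D$ I would rely either on a second application of Theorem~\ref{thm:stronglythin} to the triangles $(a, b, u)$ and $(a, b, u')$, or on the fineness of $\Gamma_n$ (Lemma~\ref{lem:GPgraph}) in the form of bounded intersections of neighbourhoods of distinct cone vertices, in each case carefully tracking the constants to ensure the $4D$ bound holds.
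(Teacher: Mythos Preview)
Your treatment of the case $\rho(U)\geq D+1$ is essentially the paper's: pick a midpoint index on $p^{ab}$, define $z$ via Theorem~\ref{thm:stronglythin}, and show $|z,u|_S\leq \rho-1$ for every $u\in U$, contradicting the minimality of $\rho$. That part is correct and complete.

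The genuine gap is exactly where you flag it: the subcase $\rho\leq D$ with $U\subseteq W$ and $|U|\geq 2$. Neither of your suggested fixes succeeds. Re-applying Theorem~\ref{thm:stronglythin} to $(a,b,u)$ and $(a,b,u')$ still produces the \emph{same} point $z$, which may lie in $W$ (the deep case); you would then need the triangle inequality through $z$ to combine $|z,p^{au}_+|_S\leq D$ and $|z,p^{bu'}_+|_S\leq D$, and that is precisely the inequality that fails for cone vertices. The fineness route gives only finiteness of the relevant intersection of coset neighbourhoods, not a bound in terms of $D$, so it cannot deliver the stated $4D$.

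The paper's resolution is to \emph{swap the roles of the base and the apex}: apply Theorem~\ref{thm:stronglythin} to the triple $(u,u',a)$, with $u\neq u'\in U$ playing the role of the endpoints and $i=0$. The first vertex $v=p^{uu'}_0$ of a geodesic from $u$ to $u'$ in $\Gamma$ lies in $V$ and is never $(\epsilon,R)$-deep (since $R\geq 1$), so the theorem's output is $z=v\in V$. One obtains a vertex $v_a\in V$ on a geodesic from $a$ to $u$ or $u'$ with $|v_a,v|_S\leq D$ and $|a,v_a|_S\leq \rho$; likewise $v_b$ for $b$. Now the chain $a,v_a,v,v_b,b$ passes only through $V$, the triangle inequality applies, and $|a,b|_S\leq 2\rho+2D\leq 4D$. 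This single argument in fact covers all of $|U|\geq 2$ at once, with no need to split on whether $U$ meets $V$.
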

\begin{proof} Assume first $\rho(U)\leq D$. If $U$ is a single vertex $v\in
V$, then $|a,b|_S\leq |a,v|_S+|v,b|_S\leq 2D$ and we are done.
If there are $u\neq u'\in U$, then let $v$ be the first vertex
on a geodesic from $u$ to $u'$ in~$\Gamma$. Since the
vertex $v$ is not $(\epsilon, R)$-deep, by
Theorem~\ref{thm:stronglythin} applied to $u,u', a$
(respectively $u,u',b$), we obtain a vertex $v_a$ (respectively
$v_b$) on a geodesic in $\Gamma$ from $a$ (respectively $b$)
to $u$ or $u'$ satisfying $|v_a,v|_S\leq D$ (respectively
$|v_b,v|_S\leq D$). Consequently $|a,b|_S\leq
|a,v_a|_S+|v_a,v|_S+|v,v_b|_S+|v_b,b|_S\leq 2\rho(U)+2D\leq
4D$, as desired.

Henceforth, we assume $\rho(U)\geq D+1$. Let $\ell=|a,b|_S$. If
$\ell> 4D$, then choose any $2D\leq i\leq \ell-2D$, and any
geodesic $p^{ab}$ from~$a$ to~$b$ in~$\Gamma$. If $p^{ab}_i$ is
an $(\epsilon, R)$-deep vertex of $p^{ab}$ in the peripheral
left coset~$w$, then let $z=w$, otherwise let $z=p^{ab}_i$. We
claim that for any $c\in U$ we have $|z,c|_S\leq \rho(U)-1$,
contradicting the definition of $\rho(U)$, and implying
$\ell\leq 4D$.

Indeed, we apply Theorem~\ref{thm:stronglythin} to $a,b,c,$ and
any geodesics $p^{bc},p^{ac}$. Without loss of generality
assume that we have $|z,p^{ac}_{i}|_S\leq D$. Note that if
$i>|a,c|_S$, then $p^{ac}_i$ lies in (or is equal to) $c$, and
consequently $|z,c|_S\leq |z,p^{ac}_{i}|_S\leq D\leq\rho(U)-1$,
as desired. If $i\leq|a,c|_S$, then $|p^{ac}_i,a|_S=i\geq 2D$,
and hence
$$|z,c|_S\leq
|z,p^{ac}_{i}|_S+|p^{ac}_{i},c|_S\leq
D+(|c,a|_S - |p^{ac}_i,a|_S)\leq D+ (\rho(U)-2D),$$ as desired.
\end{proof}

\begin{proof}[Proof of Theorem~\ref{thm:fixpoint}]
Let $n\geq 4D$. Consider a finite orbit $U\subset V\cup W$ of
$F$. If $U$ is a single vertex, then there is nothing to prove.
Otherwise, by Lemma~\ref{lem:quasicentre} the quasi-centre of
$U$ forms a fixed clique in~$\Gamma_n$.
\end{proof}

\subsection{Convexity}

\begin{definition}
Let $\mu$ be a positive integer. A subset $U\subset V\cup W$ is
\emph{$\mu$-convex} with respect to $u\in U$ if for any
geodesic $(p_j)_{j=0}^\ell$ in $\Gamma$ from $u$ to $u'\in U$,
for any $j\leq \ell-\mu$, we have
\begin{enumerate}[(i)]
\item $p_j\in U$, and
\item for each $w\in W$ with $|w,p_j|_S\leq \epsilon$ we
    have $w\in U$.
\end{enumerate}
\end{definition}

\begin{definition}
\label{def:conv}
Let $r$ be a positive integer and let $U\subset V\cup W$ be a
finite subset. The \emph{$r$-hull} $U_r$ of $U$ is the union of
\begin{enumerate}[(i)]
\item all the vertices $v\in V$ with $|v,u|_S\leq r$ for
    each $u\in U$, and
\item all the cone vertices $w\in W$ with $|w,u|_S\leq
    r+\epsilon$ for each $u\in U$.
\end{enumerate}
\end{definition}

\begin{lemma}
\label{rem:finiteU} If $|U|\geq 2$, then each $U_r$ is finite.
\end{lemma}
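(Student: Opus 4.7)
The plan is to exploit the fineness of the $n$-Rips graph $\Gamma_n$ (Lemma~\ref{lem:GPgraph}), applied to a single edge joining two distinct elements of $U$, and read off the bound on $|U_r|$ from counting triangles through that edge.

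First, since $|U|\geq 2$, I would pick two distinct elements $u_1,u_2\in U$. Set $n=\max\{r+\epsilon,\,|u_1,u_2|_S\}$; this is a finite integer because peripheral left cosets lie at finite $S$-distance from each other in $\Gamma$. Since $|u_1,u_2|_S\leq n$ and $u_1\neq u_2$, the pair $\{u_1,u_2\}$ spans an edge $e$ in $\Gamma_n$.

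Next, I would show that every $u\in U_r$ that is different from $u_1$ and $u_2$ forms a $3$-circuit in $\Gamma_n$ together with $u_1,u_2$. Indeed, by Definition~\ref{def:conv}, whether $u\in V$ or $u\in W$, the element $u$ satisfies $|u,u_i|_S\leq r+\epsilon\leq n$ for $i=1,2$, so $u$ is joined by an edge to each of $u_1$ and $u_2$ in $\Gamma_n$. Hence $(u_1,u,u_2)$ is a triangle containing $e$.

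By Lemma~\ref{lem:GPgraph}, the graph $\Gamma_n$ is fine, so only finitely many circuits of length $3$ contain the edge $e$. Each such circuit has only three vertices, so there are only finitely many possible choices of $u$, and therefore $|U_r|\leq 2+(\text{number of }3\text{-circuits through }e)<\infty$.

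There is no real obstacle here: the only mild point to verify is that $n$ can be chosen to dominate $|u_1,u_2|_S$, which is immediate from the connectedness of $\Gamma$ and the fact that $\mc P$ consists of subgroups of $G$ (so every peripheral left coset meets $V$ at finite distance from any other). The crucial input is the fineness of $\Gamma_n$ already established in Lemma~\ref{lem:GPgraph}.
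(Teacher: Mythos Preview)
Your proof is correct and follows essentially the same approach as the paper: pick two distinct elements of $U$, observe they span an edge in a suitable Rips graph, note that any other element of $U_r$ completes a $3$-circuit through that edge, and invoke fineness from Lemma~\ref{lem:GPgraph}. The only cosmetic difference is that the paper replaces $r$ by $\max\{r,|u,u'|_S\}$ (since $U_r$ only grows with $r$) and works in $\Gamma_{r+\epsilon}$, whereas you keep $r$ fixed and enlarge $n$ to $\max\{r+\epsilon,|u_1,u_2|_S\}$; both devices serve the same purpose of ensuring the chosen pair spans an edge.
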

\begin{proof}
Choose $u\neq u'\in U$. Assume without loss of generality
$r\geq |u,u'|_S$. Each vertex of $U_r$ distinct from $u$ and
$u'$ forms with $u$ and $u'$ a circuit in $\Gamma_{r+\epsilon}$
of length $3$. There are only finitely many such circuits,
since $\Gamma_{r+\epsilon}$ is fine by Lemma~\ref{lem:GPgraph}.
\end{proof}

\begin{lemma}
\label{lem:hull} Let $U\subset V\cup W$ be a finite subset with
$|u,u'|_S\leq \mu$ for all $u,u'\in U$. Then each $U_r$, with
$r\geq \mu$, is $(\mu+2D)$-convex with respect to all $b\in U$.
\end{lemma}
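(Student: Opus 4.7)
The plan is to apply Theorem~\ref{thm:stronglythin} to the triangle $(b,b',u)$ for each $u\in U$, using the given geodesic $p=(p_j)_{j=0}^\ell$ as one side together with arbitrary geodesics $p^{bu},p^{b'u}$ in $\Gamma$. For fixed $j\leq\ell-(\mu+2D)$, the theorem associates to $p_j$ a vertex or cone vertex $z$ (equal to $p_j$, or to the peripheral left coset $w^*$ in which $p_j$ is $(\epsilon,R)$-deep), and yields either $|z,p^{bu}_j|_S\leq D$ or $|z,p^{b'u}_{\ell-j}|_S\leq D$.

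Before the main estimate I would observe that condition~(ii) is a formal consequence of~(i): for any $w\in W$ with $|w,p_j|_S\leq\epsilon$, the triangle inequality through the $V$-vertex $p_j$ yields
\[
|w,u|_S\leq|w,p_j|_S+|p_j,u|_S\leq\epsilon+r
\]
for every $u\in U$, placing $w$ in $U_r$ as soon as $p_j\in U_r$. The task therefore reduces to bounding $|p_j,u|_S\leq r$ for every $u\in U$, i.e.\ to proving (i).

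Fix $u\in U$ and exploit $|b,u|_S\leq\mu$, $|b',u|_S\leq r+\epsilon$, and $j\leq\ell-(\mu+2D)$. In the first alternative of Theorem~\ref{thm:stronglythin}, the short length of $p^{bu}$ forces $p^{bu}_j$ either to coincide with the terminal $V$-vertex of $p^{bu}$ or to lie within Cayley distance $\mu-j$ of it, giving $|z,u|_S\leq D+\max(0,\mu-j)$ via triangle inequality through this $V$-vertex. In the second alternative, a symmetric computation with $p^{b'u}$ and the bound $\ell-j\geq\mu+2D$ yields $|z,u|_S\leq r+\epsilon-\mu-D\leq r$, using $D\geq\epsilon$. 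In the non-deep subcase $z=p_j$, and the small-$j$ range where the first alternative could give a weaker bound than $r$ is handled by the direct inequality $|p_j,u|_S\leq j+\mu\leq r$, valid via triangle inequality through $b$ (when $b\in V$) or through the start vertex $p_0\in V\cap b$ after a short auxiliary thin-triangle argument at index $0$ (when $b\in W$).

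I expect the principal obstacle to be the $(\epsilon,R)$-deep subcase, in which $z=w^*\in W$ and Theorem~\ref{thm:stronglythin} controls only $|w^*,u|_S$, while the extended pseudo-metric $|\cdot,\cdot|_S$ admits no triangle inequality through $W$-vertices. The $(\epsilon,R)$-deep condition supplies the whole window $p_{j-R},\ldots,p_{j+R}$ of vertices of $p$ each within $\epsilon$ of $w^*$; combining this with the thin-triangle bound $|w^*,p^{bu}_j|_S\leq D$ (or its counterpart for $p^{b'u}$) locates an element of $w^*\cap V$ in $\Gamma$ that is simultaneously Cayley-close to $p_j$ and to the short triangle side. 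A direct triangle argument in $\Gamma$ then delivers $|p_j,u|_S\leq r$, with the $\epsilon$-offset built into the definition of $U_r$ for cone vertices providing the final slack needed for the bookkeeping.
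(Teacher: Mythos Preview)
Your plan founders precisely at the point you flag as the principal obstacle. When $p_j$ is $(\epsilon,R)$-deep in $w^*$, Theorem~\ref{thm:stronglythin} gives, say, $|w^*,p^{b'u}_{\ell-j}|_S\leq D$. Concretely this means there exists some $v_1\in w^*$ with $|v_1,p^{b'u}_{\ell-j}|_S\leq D$, while the deep condition gives some $v_0\in w^*$ with $|p_j,v_0|_S\leq\epsilon$. There is no reason whatsoever for $v_0$ and $v_1$ to be the same, or even Cayley-close: distinct elements of a peripheral coset can lie at arbitrary $S$-distance. The window $p_{j-R},\ldots,p_{j+R}$ only produces further coset elements near $p_j$; it does nothing to bring $v_1$ into that neighbourhood. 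So you cannot chain to a bound on $|p_j,u|_S$, and the $\epsilon$-offset in Definition~\ref{def:conv}(ii) is irrelevant here since $p_j\in V$. (Your small-$j$ argument via $p_0$ when $b\in W$ has a similar flaw: $|b,u|_S\leq\mu$ does not bound $|p_0,u|_S$ for the particular $p_0\in b$. This is reparable, but the deep case is not, at least not along the lines you sketch.)

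The paper sidesteps the whole issue with one stroke. Orienting the geodesic from $a\in U_r$ to $b\in U$ (so the relevant index satisfies $\mu+2D\leq j\leq\ell$), it applies Theorem~\ref{thm:stronglythin} at $i=\ell$, the terminal index. An endpoint is never $(\epsilon,R)$-deep, so $z=p^{ab}_\ell\in V$ automatically, and the theorem delivers a $V$-vertex on $p^{ac}$ or on $p^{bc}$ within $D$ of $p^{ab}_\ell$. One then bounds $|p^{ab}_j,c|_S$ by the honest triangle inequality through these $V$-vertices, using $|p^{ab}_j,p^{ab}_\ell|_S=\ell-j\leq (r+\epsilon)-(\mu+2D)$ together with $|b,c|_S\leq\mu$ and $|a,c|_S\leq r+\epsilon$. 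The deep/transition dichotomy never enters, and three short lines of arithmetic finish the proof.
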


\begin{proof}
Let $b\in U$, and $a\in U_r$. Let $(p^{ab}_j)_{j=0}^\ell$ be a geodesic from
$a$ to $b$ in~$\Gamma$, and let $\mu+2D\leq j\leq \ell$. By Definition~\ref{def:conv}, we
have $\ell\leq r+\epsilon$. To prove the lemma, it suffices to
show that $p^{ab}_j\in U_r$.

Consider any $c\in U$ and apply Theorem~\ref{thm:stronglythin}
with $i=\ell$. In that case $p_i^{ab}$ is not $(\epsilon,
R)$-deep and thus $z=p_i^{ab}$. Consequently, we have
$|p_\ell^{ab} , p^{ac}_\ell|_S\leq D$ or $|p_\ell^{ab} ,
p^{bc}_{0} |_S\leq D.$

In the second case, using $\epsilon\leq D$, we have
$$|p^{ab}_j,c|_S\leq |p^{ab}_j,p^{ab}_\ell|_S+|p^{ab}_\ell,
p^{bc}_0|_S+|p^{bc}_0, c|_S\leq
\big((r+\epsilon)-(\mu+2D)\big)+D+\mu \leq r,$$ as desired.

In the first case, if $\ell> |a,c|_S$, then $p^{ac}_\ell$ lies
in (or is equal to) $c$ and hence
$$|p^{ab}_j,c|_S\leq |p^{ab}_j,p^{ab}_\ell|_S+|p^{ab}_\ell, p^{ac}_\ell|_S\leq \big((r+\epsilon)-(\mu+2D)\big)+D\leq
r.$$ If in the first case $\ell\leq |a,c|_S$, then
$$|p^{ab}_j,c|_S\leq |p^{ab}_j,p^{ab}_\ell|_S+|p^{ab}_\ell, p^{ac}_\ell|_S+|p^{ac}_\ell,
c|_S\leq \big(\ell-(\mu+2D)\big)+D+(r+\epsilon-\ell)\leq r.$$
\end{proof}

\subsection{Contractibility}
\label{sub:contr}

In this subsection we prove Theorem~\ref{thm:contractible}. To
do that, we use dismantlability.

\begin{definition}
We say that a vertex $a$ of a graph is \emph{dominated} by an
adjacent vertex $z\neq a$, if all the vertices adjacent to $a$
are also adjacent to $z$.

A finite graph is \emph{dismantlable} if its vertices can be
ordered into a sequence $a_1, \ldots, a_k$ so that for each
$i<k$ the vertex $a_i$ is dominated in the subgraph induced on
$\{a_i,\ldots,a_k\}$.
\end{definition}

Polat proved that the automorphism group of a dismantlable
graph fixes a clique \cite[Theorem A]{Po93} (for the proof see also \cite[Theorem~2.4]{HOP14}).
We will use the following strengthening of that result.

\begin{theorem}[{\cite[Theorem~6.5]{BaMi12},\cite[Theorem~1.2]{HOP14}}]
\label{thm:Damian} Let $\Gamma$ be a finite dismantlable graph.
Then for any subgroup $H\leq\mathrm{Aut}(\Gamma)$, the
fixed-point set $(\Gamma^\blacktriangle)^H$ is contractible.
\end{theorem}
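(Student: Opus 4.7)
The plan is induction on $|V(\Gamma)|$, with trivial base case $|V(\Gamma)|=1$. The building block is the elementary \emph{folding lemma}: if $a\in V(\Gamma)$ is dominated by $z\in V(\Gamma)$, then the simplicial map $f_{a\to z}\colon\Gamma^\blacktriangle\to\Gamma^\blacktriangle$ sending $a\mapsto z$ and fixing every other vertex is well-defined, and deformation retracts $\Gamma^\blacktriangle$ onto $(\Gamma\setminus\{a\})^\blacktriangle$. Indeed the domination condition forces $\sigma\cup\{z\}$ to be a clique whenever $\sigma\ni a$ is a clique, so the straight-line homotopy $H_t(x)=(1-t)x+tf_{a\to z}(x)$ remains in $\Gamma^\blacktriangle$.

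For the inductive step I would construct an $H$-invariant proper dismantlable subgraph $\Gamma'\subsetneq\Gamma$ together with an $H$-equivariant deformation retraction $F\colon\Gamma^\blacktriangle\to(\Gamma')^\blacktriangle$; restricting to $H$-fixed points then yields a deformation retraction of $(\Gamma^\blacktriangle)^H$ onto $((\Gamma')^\blacktriangle)^H$, contractible by the inductive hypothesis. Writing $N[v]$ for the closed neighbourhood $\{v\}\cup N(v)$, the construction proceeds in two phases. Phase~1 handles \emph{twins}, i.e.\ the $H$-invariant equivalence $u\sim_t v$ iff $N[u]=N[v]$; each twin class is an $H$-invariant clique, and collapsing each to its $\mathrm{Stab}_H$-invariant barycentre provides an $H$-equivariant deformation retraction of $\Gamma^\blacktriangle$ onto the clique complex of a twin-free dismantlable graph $\overline\Gamma$ with induced $H$-action. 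Phase~2: on $\overline\Gamma$, use the $H$-invariant strict partial order $u\prec v$ iff $N[u]\subsetneq N[v]$, pick an $H$-orbit $O$ of dominated vertices having $|N[\cdot]|$ minimum among dominated vertices of $\overline\Gamma$, and set $\Gamma':=\overline\Gamma\setminus O$.

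The main obstacle is defining $F$ $H$-equivariantly. For each $a\in O$, let $U(a):=\{z:z\succ a\}$. One verifies that $U(a)$ is a clique (any two strict dominators of $a$ are adjacent, by the definition of domination), is disjoint from $O$ (every $z\in U(a)$ has $|N[z]|>|N[a]|$, contradicting the minimality defining $O$), and is preserved by $\mathrm{Stab}_H(a)$. The snag is that $\mathrm{Stab}_H(a)$ may act on the clique $U(a)$ without fixing any vertex. I would resolve this by folding $a$ not to a vertex but to the $\mathrm{Stab}_H(a)$-invariant barycentre of $U(a)$, and extending affinely over simplices. The resulting continuous map $F\colon\overline\Gamma^\blacktriangle\to(\Gamma')^\blacktriangle$ is $H$-equivariant and homotopic to the identity via the straight-line homotopy. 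It is well defined because for every clique $\sigma$ of $\overline\Gamma$ the set $(\sigma\setminus O)\cup\bigcup_{a\in\sigma\cap O}U(a)$ spans a clique in $\overline\Gamma$: for distinct $a,a'\in\sigma\cap O$ and any $z\in U(a)$, $z'\in U(a')$, one has $z\sim a'$ (since $z$ dominates $a$ and $a\sim a'$), hence $z'\sim z$ (since $z'$ dominates $a'$ and $z\sim a'$). Equivalently, the construction can be packaged as an $H$-invariant discrete Morse matching on the face poset of $\Gamma^\blacktriangle$, pairing simplices $\sigma\ni a$ with $\sigma\cup\{z(a)\}$, whose induced $H$-equivariant collapse restricts to a collapse on every fixed-point subcomplex.

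Dismantlability of $\Gamma'$ follows from the standard confluence fact that removing a dominated vertex from a dismantlable graph leaves a dismantlable graph, applied iteratively to the vertices of $O$. This completes the inductive step and hence the proof.
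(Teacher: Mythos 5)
Your proposal is correct in its essentials, and it is necessarily a different route from the paper's, because the paper does not prove this statement at all: it is quoted from Barmak--Minian and from Hensel--Osajda--Przytycki. Your argument --- induct on the number of vertices; first collapse each twin class $N[u]=N[v]$ to its barycentre; then, in the twin-free graph, fold each vertex of a chosen orbit $O$ of dominated vertices onto the barycentre of its clique $U(a)$ of strict dominators --- is a clean, self-contained replacement, and the barycentre trick is exactly the right way to sidestep the non-equivariance of choosing a single dominating vertex. I checked the key verifications ($U(a)$ is a clique disjoint from $O$; the image of a clique $\sigma$ under the fold, and even $\sigma$ together with that image, is again a clique; $O\ne V(\overline\Gamma)$ since each $U(a)$ is nonempty and avoids $O$) and they all go through. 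In spirit this is the same mechanism as the ``strong collapses'' of Barmak--Minian that underlie the cited proofs, but your write-up is more elementary and does not invoke their machinery.

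Two points deserve attention. First, you assert without proof that the twin quotient $\overline\Gamma$ is dismantlable, and you invoke a ``standard confluence fact'' for the dismantlability of $\Gamma'=\overline\Gamma\setminus O$. Both reduce to the classical fact that a retract (in the sense of reflexive graph homomorphisms) of a finite dismantlable, i.e.\ cop-win, graph is dismantlable: $\overline\Gamma$ is realised as the induced subgraph on a choice of twin-class representatives, onto which $\Gamma$ retracts, and $\Gamma\setminus\{a\}$ is the image of the fold $f_{a\to z}$. This is standard (Nowakowski--Winkler, Quilliot; see also \cite[Lemma 2.2]{HOP14}), but it is a genuine ingredient and should be stated and cited rather than left implicit --- without it the induction has no hypothesis to apply. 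Second, the closing ``equivalently'' about an $H$-invariant discrete Morse matching pairing $\sigma$ with $\sigma\cup\{z(a)\}$ does not work as stated: it requires a $\mathrm{Stab}_H(a)$-invariant choice of a single vertex $z(a)$, which is precisely what may fail and what your barycentre construction was designed to avoid. Drop that aside (or replace the pairing by one adapted to the whole clique $U(a)$); the main argument does not depend on it.
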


Our key result is the dismantlability in the $n$-Rips graph.

\begin{lemma}
\label{lem:dismantlable} Let $U\subset V\cup W$ be a finite
subset that is $6D$-convex with respect to some $b\in U$. Then
for $n\geq 7D$, the subgraph of $\Gamma_n$ induced on $U$ is
dismantlable.
\end{lemma}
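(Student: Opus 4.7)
The plan is to adapt the standard hyperbolic-group peeling argument of \cite{HOP14}: order $U=\{a_1,\ldots,a_k\}$ by non-increasing $S$-distance to $b$, with $a_k=b$ last, and verify that each $a_i$ with $i<k$ is dominated in the subgraph of $\Gamma_n$ induced on $\{a_i,\ldots,a_k\}$. The form of the dominator $z$ will depend on whether the vertex being peeled is far from $b$ or close to it.

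When $|a_i,b|_S\leq 6D$, I take $z:=b$: every $c\in\{a_i,\ldots,a_k\}$ satisfies $|c,b|_S\leq|a_i,b|_S\leq 6D\leq n$ by the ordering, so $b$ is adjacent in $\Gamma_n$ to everything remaining, and in particular dominates $a_i$. In the main case $|a_i,b|_S>6D$, I fix a geodesic $p^{a_ib}$ in $\Gamma$ and set $z:=w$ if $p^{a_ib}_{6D}$ is $(\epsilon,R)$-deep in some peripheral coset $w$, and $z:=p^{a_ib}_{6D}$ otherwise. The routine checks are: (a) $z\in U$, which follows from $6D$-convexity of $U$ with respect to $b$ applied to the reversed geodesic, part (i) covering the vertex case and part (ii) covering the deep/cone case; (b) $|z,b|_S\leq|a_i,b|_S-6D+\epsilon<|a_i,b|_S$ (using $D\geq\epsilon$), so $z$ lies strictly later in the ordering and hence in $\{a_{i+1},\ldots,a_k\}$; and (c) $|z,a_i|_S\leq 6D+\epsilon\leq 7D\leq n$, which gives the edge $za_i$ in $\Gamma_n$.

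The heart is the domination step. Given $c$ in the remaining set with $|a_i,c|_S\leq n$, I apply Theorem~\ref{thm:stronglythin} to the triple $a_i,b,c$ at index $6D$, writing $\ell=|a_i,b|_S$. The theorem yields $|z,p^{a_ic}_{6D}|_S\leq D$ or $|z,p^{bc}_{\ell-6D}|_S\leq D$. In the first branch the remaining segment of $p^{a_ic}$ from $p^{a_ic}_{6D}$ to $c$ has length at most $\max(0,|a_i,c|_S-6D)\leq n-6D$, giving $|z,c|_S\leq n-5D\leq n$. In the second branch the ordering forces $|c,b|_S\leq\ell$, so the remaining segment of $p^{bc}$ from $p^{bc}_{\ell-6D}$ to $c$ has length at most $6D$, giving $|z,c|_S\leq 7D\leq n$. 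The over-indexing cases (when $6D>|a_i,c|_S$ or $\ell-6D>|b,c|_S$) are handled automatically by the convention on edge-paths recalled before Theorem~\ref{thm:stronglythin}.

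The main obstacle is not a single deep step but the simultaneous fitting of one constant, the cut-off $6D$: it must be large enough for $z$ to be pulled back into $U$ (via $6D$-convexity) and strictly closer to $b$ than $a_i$, yet small enough that $|z,a_i|_S$ and both branches of the thin triangle output fit under $n$. The binding constraint is $6D+D=7D\leq n$, coming from the second branch, which matches the lemma's numerical hypothesis and explains where the threshold $n\geq 7D$ originates.
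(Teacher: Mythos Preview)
Your proof is correct and follows essentially the same peeling argument as the paper: order by distance to $b$, use $b$ as dominator when close, and otherwise take $z$ to be the point (or containing coset) at index $6D$ along a geodesic to $b$, verifying domination via the thin triangle theorem at that index. The only cosmetic differences are that the paper splits at $\ell\leq 7D$ rather than $6D$, and instead of your direct check that $|z,b|_S<|a_i,b|_S$ it proves the auxiliary claim that each successive $U\setminus\{a\}$ remains $6D$-convex; both variants accomplish the same bookkeeping.
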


\begin{proof}
We order the vertices of $U$ according to $|\cdot,b|_S$,
starting from $a\in U$ with maximal $|a,b|_S$, and ending with
$b$.

We first claim that unless $U=\{b\}$, the set $U-\{a\}$ is
still $6D$-convex with respect to $b$. Indeed, let $u\in
U-\{a\}$ and let $(p_j)_{j=0}^\ell$ be a geodesic from $b$ to
$u$ in $\Gamma$. Let $j\leq \ell-6D$. Then $|p_j,b|_S\leq
\ell-6D<|a,b|_S$, so $p_j\neq a$ and hence $p_j\in U-\{a\}$
since $U$ was $6D$-convex. Similarly, if $w\in W$ and
$|w,p_j|_S\leq \epsilon$, then $|w,b|\leq
\epsilon+(\ell-6D)<|a,b|_S$, so $w\neq a$ and hence $w\in
U-\{a\}$. This justifies the claim.

It remains to prove that $a$ is dominated in the subgraph of
$\Gamma_n$ induced on $U$ by some vertex $z$. Let
$(p^{ab}_j)_{j=0}^\ell$ be a geodesic from $a$ to $b$ in
$\Gamma$. If $\ell\leq 7D$, then we can take $z=b$ and the
proof is finished. We will henceforth suppose $\ell>7D$. If
$p^{ab}_{6D}$ is an $(\epsilon, R)$-deep vertex of $p^{ab}$ in
the peripheral left coset $w$, then let $z=w$, otherwise let
$z=p^{ab}_{6D}$. Note that by definition of convexity, we have
$z\in U$. We will show that $z$ dominates~$a$.

Let $c\in U$ be adjacent to~$a$ in $\Gamma_n$, which means
$|a,c|_S\leq n$. We apply Theorem~\ref{thm:stronglythin} to
$a,b,c,i=6D$ and any geodesics $(p^{bc}_j),(p^{ac}_j)$.
Consider first the case where $|z,p^{ac}_{6D}|_S\leq D$. If
$6D\leq |a,c|_S$, then $$|c,z|_S\leq |c,p^{ac}_{6D}|_S
+|p^{ac}_{6D},z|_S\leq (n-6D)+D<n,$$ so $c$ is adjacent to $z$
in $\Gamma_n$, as desired. If $|a,c|_S<6D$, then $p^{ac}_{6D}$
lies in (or is equal to) $c$ and hence $|c,z|_S\leq D<n$ as
well.

Now consider the case where $|z, p^{bc}_{\ell-6D}|_S \leq D$.
Since $a$ was chosen to have maximal $|a,b|_S$, we have
$|a,b|_S\geq |c,b|_S$, and hence $|c,p^{bc}_{\ell-6D}|_S\leq
6D$. Consequently,
$$|c,z|_S\leq |c,p^{bc}_{\ell-6D}|+|p^{bc}_{\ell-6D},z|_S+\leq
6D+D\leq n.$$
\end{proof}

We are now ready to prove the contractibility of the
fixed-point sets.

\begin{proof}[Proof of Theorem~\ref{thm:contractible}]
Let $n\geq 7D$. Suppose that the fixed-point set
$\mathrm{Fix}=(\Gamma_n^\blacktriangle)^H$ is nonempty.

\smallskip

\noindent \textbf{Step 1.} \emph{The fixed-point set
$\mathrm{Fix'}=(\Gamma_{4D}^\blacktriangle)^H$ is nonempty.}

\smallskip

Let $U$ be the vertex set of a simplex in
$\Gamma_n^\blacktriangle$ containing a point of $\mathrm{Fix}$
in its interior. Note that $U$ is $H$-invariant. If $U$ is a
single vertex $u$, then $u\in\mathrm{Fix'}$ and we are done.
Otherwise, by Lemma~\ref{lem:quasicentre}, the quasi-centre of
$U$ spans a simplex in $\Gamma_{4D}^\blacktriangle$.
Consequently, its barycentre lies in $\mathrm{Fix}'$.

\smallskip

\noindent \textbf{Step 2.} \emph{If $\mathrm{Fix}'$ contains at
least $2$ points, then it contains a point outside $W$.}

Otherwise, choose $w\neq w'\in\mathrm{Fix}'$ with minimal
$|w,w'|_S$. If $|w,w'|_S\leq 4D$, then the barycentre of the
edge $ww'$ lies in $\mathrm{Fix}'$, which is a contradiction.
If $|w,w'|_S> 4D$, then $\rho(\{w,w'\})\leq
\big\lceil\frac{|w,w'|_S}{2}\big\rceil<|w,w'|_S$. Let $U'$ be
the quasi-centre of $\{w,w'\}$. By Lemma~\ref{lem:quasicentre},
we have that $U'$ spans a simplex in
$\Gamma_{4D}^\blacktriangle$, with barycentre in
$\mathrm{Fix}'$. If $U'$ is not a single vertex, this is a
contradiction. Otherwise, if $U'$ is a single vertex $w''\in
W$, then $|w,w''|_S\leq \rho(\{w,w'\})<|w,w'|_S$, which
contradicts our choice of $w,w'$.

\smallskip
\noindent \textbf{Step 3.} \emph{$\mathrm{Fix}$ is
contractible.}

\smallskip
By Step~1, the set $\mathrm{Fix}'$ is nonempty. If
$\mathrm{Fix}'$ consists of only one point, then so does
$\mathrm{Fix}$, and there is nothing to prove. Otherwise, let
$\Delta$ be the simplex in $\Gamma_{4D}^\blacktriangle$
containing in its interior the point of $\mathrm{Fix}'$
guaranteed by Step~2. Note that $\Delta$ is also a simplex in
$\Gamma_n^\blacktriangle$ with barycentre in $\mathrm{Fix}$.
Since $\Delta$ is not a vertex of $W$, by
Lemma~\ref{rem:finiteU} all its $r$-hulls $\Delta_r$ are
finite. By Lemma~\ref{lem:hull}, each $\Delta_r$ with $r\geq
4D$ is $6D$-convex. Thus by Lemma~\ref{lem:dismantlable}, the
$1$-skeleton of the span $\Delta^\blacktriangle_r$ of
$\Delta_r$ in $\Gamma_n^\blacktriangle$ is dismantlable. Hence
by Theorem~\ref{thm:Damian}, the set $\mathrm{Fix}\cap
\Delta^\blacktriangle_r$ is contractible. Note that $\Delta_r$
exhaust entire $V\cup W$. Consequently, entire $\mathrm{Fix}$
is contractible, as desired.
\end{proof}

\subsection{Edge-dismantlability}
\label{sub:edge-dis} Mineyev and Yaman introduced for a
relatively hyperbolic group a complex $X(\mathcal{G},\mu)$,
which they proved to be contractible \cite[Theorem 19]{MiYa07}.
However, analysing their proof, they exhaust the 1-skeleton of
$X(\mathcal{G},\mu)$ by finite graphs that are not dismantlable
but satisfy a slightly weaker relation, which we can call
\emph{edge-dismantlability}.

An edge $(a,b)$ of a graph is \emph{dominated} by a vertex $z$
adjacent to both $a$ and $b$, if all the other vertices
adjacent to both $a$ and $b$ are also adjacent to $z$. A finite
graph $\Gamma$ is \emph{edge-dismantlable} if there is a
sequence of subgraphs $\Gamma=\Gamma_1\supset \Gamma_2\supset
\cdots  \supset\Gamma_k$, where for each $i<k$ the graph
$\Gamma_{i+1}$ is obtained from $\Gamma_i$ by removing a
dominated edge or a dominated vertex with all its adjacent
edges, and where $\Gamma_k$ is a single vertex.

In dimension 2 the notion of edge-dismantlability coincides
with collapsibility, so by \cite{Se} the automorphism group of
$\Gamma$ fixes a clique, similarly as for dismantlable graphs.

\begin{question}
Does the automorphism group of an arbitrary edge-dismantlable graph
$\Gamma$ fix a clique? For arbitrary $H\leq\mathrm{Aut}(\Gamma)$, is the
fixed-point set $(\Gamma^\blacktriangle)^H$ contractible?
\end{question}

\section{Proof of the thin triangle theorem}
\label{sec:triangle}

\subsection{Preliminaries}

Given an edge-path $p=(p_i)_{i=0}^\ell$, we use the following
notation. The length $\ell$ of $p$ is denoted by $l(p)$, the
initial vertex $p_0$ of $p$ is denoted by $p_-$, and its
terminal vertex $p_\ell$ is denoted by $p_+$. For integers $0\leq
j\leq k\leq l(p)$, we denote by $p[j,k]$ the subpath
$(p_i)_{i=j}^k$, and by $p[k,j]$ we denote the inverse path.

The group $G$ is hyperbolic relative to $\mc P$ in the sense of Osin~\cite[Definition 1.6, Theorem 1.5]{Os06}. We first recall two results from~\cite{Os06, Os07}. Consider the alphabet
$\mathscr{P}=S\sqcup \bigsqcup_\lambda P_\lambda$. Every word
in this alphabet represents an element of $G$, and note that
distinct letters might represent the same element. Let $\bar
\Gamma$ denote the Cayley graph of $G$ with respect to
$\mathscr{P}$.

\begin{theorem}[{\cite[Theorem 3.26]{Os06}}]
\label{thm:OsinTriangle} There is a
constant $K>0$ with the following property. Consider a triangle
whose sides $p$, $q$, $r$ are geodesics in $\bar \Gamma$. For
any vertex $v$ of $p$, there exists a vertex $u$ of $q\cup r$
such that $|u,v|_S \leq K.$
\end{theorem}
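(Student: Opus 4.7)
The plan is to derive the $S$-thinness of $\bar\Gamma$-geodesic triangles from the Gromov hyperbolicity of $\bar\Gamma$ in its own path metric, combined with the linear relative isoperimetric inequality that characterises relative hyperbolicity in the sense of \cite{Os06}.

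First I would establish that $\bar\Gamma$ is Gromov hyperbolic with respect to its own path metric. By the equivalent definitions of relative hyperbolicity, $G$ is finitely presented relative to $\mc P$ with a linear relative Dehn function. A van Kampen diagram argument mirroring the classical proof that a linear Dehn function implies hyperbolicity (see e.g.\ \cite[III.H]{BrHa99}) then produces a constant $\delta$ such that every $\bar\Gamma$-geodesic triangle is $\delta$-thin in the $\bar\Gamma$-metric. In particular, for any vertex $v$ of $p$ there exists a vertex $u'$ on $q\cup r$ with $d_{\bar\Gamma}(u',v)\leq\delta$.

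The heart of the argument is to upgrade this $\bar\Gamma$-bound to an $S$-bound. A $\bar\Gamma$-path of length at most $\delta$ consists of at most $\delta$ edges, each labelled either by a letter of $S$ (contributing $S$-length $1$) or by a nontrivial element of some $P_\lambda$ (possibly of large $S$-length, but joining two vertices of a common coset $gP_\lambda$). I would process the peripheral edges one at a time. If the short $u'$-to-$v$ path contains a peripheral edge in a coset $gP_\lambda$, then because $p$, $q$, $r$ are themselves $\bar\Gamma$-geodesics, each of them can have at most one edge lying in $gP_\lambda$: any two such edges could be short-cut by a single peripheral letter, contradicting the geodesic property. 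Hence $p$ and $q\cup r$ each have at most one maximal subarc inside $gP_\lambda$. Using the linear relative isoperimetric inequality to fill the resulting quadrilateral, one bounds the $S$-distance between the entry/exit pair of $p$ and that of $q\cup r$ in $gP_\lambda$ by a constant depending only on the relative presentation and on $S$. Replacing $u'$ by the vertex $u$ on $q\cup r$ lying in the same coset as $v$ reduces by one the number of peripheral edges in the connecting path; iterating at most $\delta$ times yields the desired $K$.

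The main obstacle will be making the filling argument quantitative: one must bound, uniformly in the triangle, the $S$-length of each maximal peripheral subarc of a $\bar\Gamma$-geodesic side in terms of its $\bar\Gamma$-length and the combinatorial area of the filling. A careful cut-and-paste analysis of van Kampen diagrams over the relative presentation, cut along the peripheral cosets involved, is needed to convert the linear relative Dehn function into the required $S$-comparison. This amounts to establishing a version of the bounded coset penetration property, from which the theorem follows.
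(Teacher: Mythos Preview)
The paper does not prove this statement at all: it is quoted verbatim as \cite[Theorem~3.26]{Os06} and used as a black box throughout Section~\ref{sec:triangle}. There is no in-paper proof to compare your proposal against.

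Your sketch is in the right spirit and loosely parallels Osin's actual argument, which does proceed via van Kampen diagrams over the relative presentation and the linear relative isoperimetric inequality. However, the iterative step you describe is not quite right as written. When the short $\bar\Gamma$-path from $u'$ to $v$ crosses a coset $gP_\lambda$, there is no reason that $v$ itself lies in $gP_\lambda$, so ``replacing $u'$ by the vertex $u$ on $q\cup r$ lying in the same coset as $v$'' does not parse; and even if you mean the vertex of $q\cup r$ in $gP_\lambda$, you have not explained why the new connecting path has fewer peripheral edges or why such a vertex need exist on $q\cup r$ at all. Osin's proof does not iterate in this way: he bounds directly, via a diagram estimate, the $S$-distance from $v$ to the nearest vertex of $q\cup r$ by analysing how the $2$-cells of a minimal diagram meet the sides. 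If you want a self-contained proof, the cleanest route is to establish the bounded coset penetration property first (as you acknowledge at the end) and then deduce the theorem; but this is essentially reproducing a substantial portion of \cite{Os06}, which is why the present paper simply cites the result.
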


\begin{definition}[{\cite[page 17]{Os06}}]
Let $q$ be an edge-path in $\bar \Gamma$. Subpaths of~$q$ with
at least one edge are called \emph{non-trivial}. A
\emph{$gP_\lambda$-component} of~$q$ is a maximal non-trivial
subpath $r$ such that the label of $r$ is a word in $P_\lambda
- \{1\}$ and a vertex of $r$ (and hence all its vertices)
belong to $gP_\lambda$.  We refer to $gP_\lambda$-components as
\emph{$\mc P$-components} if there is no need to specify
$gP_\lambda$. A $gP_\lambda$-component of $q$ is
\emph{isolated} if $q$ has no other $gP_\lambda$-components.
Note that $gP_\lambda$-components of geodesics in $\bar \Gamma$
are single edges and we call them  \emph{$gP_\lambda$-edges}.
\end{definition}

\begin{theorem}[{\cite[Proposition 3.2]{Os07}}]
\label{thm:n-gon} There is a constant $K>0$ satisfying the
following condition. Let $\Delta$ be an $n$-gon in
$\bar\Gamma$, which means that $\Delta$ is a closed path that
is a concatenation of $n$ edge-paths $\Delta=q^1  q^2  \dots
q^n$. Suppose that $I \subset \{1, \dots , n\}$ is such that
\begin{enumerate}
\item for $i \in I$ the side $q^i$  is an isolated $\mc P$-component of $\Delta$, and
\item for $i \not \in I$ the side $q^i$ is a geodesic in
    $\bar\Gamma$.
\end{enumerate}
Then $ \sum_{i \in I} |q^i_- , q^i_+|_S \leq Kn$.
 \end{theorem}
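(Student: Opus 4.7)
My plan is to derive the $n$-gon bound from the Bounded Coset Penetration (BCP) property of the pair $(G, \mathcal{P})$, established by Osin in~\cite{Os06}. BCP is essentially the $n=2$ instance of the statement: for two $(\lambda, c)$-quasi-geodesics in $\bar\Gamma$ sharing endpoints, a $\mathcal{P}$-component of one that is not connected to any $\mathcal{P}$-component of the other has $S$-distance between its endpoints bounded by a constant $\epsilon(\lambda, c)$ depending only on the parameters and on $(G, \mathcal{P})$. In the present setting, each geodesic side $q^i$ ($i\notin I$) is a $(1,0)$-quasi-geodesic and each isolated $\mathcal{P}$-edge $q^i$ ($i\in I$) is a single edge of $\bar\Gamma$.

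For the inductive step I would fix $i \in I$ and let $\tilde q^i$ denote the complementary path $q^{i+1} q^{i+2} \cdots q^{i-1}$ read cyclically, which is a concatenation of geodesics in $\bar\Gamma$ and isolated $\mathcal{P}$-edges. Together with $q^i$, this forms a closed digon. Because $q^i$ is isolated as a $gP_\lambda$-component of $\Delta$, no $\mathcal{P}$-component of $\tilde q^i$ lies in the coset $gP_\lambda$, so $q^i$ remains unconnected to every $\mathcal{P}$-component of $\tilde q^i$. An application of BCP to this digon would yield a bound $|q^i_-, q^i_+|_S \leq \epsilon(\lambda, c)$, and summing over $i \in I$ would produce a bound of the desired form $Kn$.

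The main obstacle lies in ensuring that the quasi-geodesic parameters $(\lambda, c)$ of $\tilde q^i$ are controlled uniformly in $n$. A concatenation of geodesic pieces and $\mathcal{P}$-edges is not automatically a quasi-geodesic, and naive estimates on its parameters degrade with $n$, which would yield a bound growing worse than linearly. To circumvent this, I would adopt a global viewpoint: rather than treating each $i \in I$ separately, I would apply a polygon-level version of BCP to the full $n$-gon at once, accumulating contributions side-by-side and exploiting the local-to-global principle for quasi-geodesics in the $\delta$-hyperbolic graph $\bar\Gamma$. This is in spirit Osin's approach in~\cite{Os07}: the linear factor $n$ in the bound arises precisely because each side of the polygon contributes an additive constant to the total, reflecting the local nature of isolation of $\mathcal{P}$-components even in large polygons.
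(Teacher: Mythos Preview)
The paper does not prove this theorem; it is quoted as \cite[Proposition~3.2]{Os07} and used as a black box. So there is no ``paper's own proof'' to compare against---your proposal is being measured against Osin's original argument.

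Your approach has a genuine gap, and you have in fact already identified it yourself. Reducing to the bigon case of BCP requires that the complementary path $\tilde q^i$ be a $(\lambda,c)$-quasi-geodesic with $\lambda,c$ independent of $n$; but a concatenation of $n-1$ geodesic segments and $\mc P$-edges has no such uniform control. Your proposed fix---to ``apply a polygon-level version of BCP to the full $n$-gon at once''---is circular: the polygon-level BCP \emph{is} the statement you are trying to prove. Invoking ``the local-to-global principle for quasi-geodesics'' does not help either, since that principle also needs a local quasi-geodesic hypothesis that is simply not available for an arbitrary closed $n$-gon.

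Osin's actual proof in \cite{Os07} proceeds by a completely different mechanism: it uses the \emph{linear relative isoperimetric inequality} (equivalently, linearity of the relative Dehn function). One fills the $n$-gon $\Delta$ by a van~Kampen diagram over the relative presentation whose number of $2$-cells is bounded linearly in the combinatorial length of $\Delta$, hence linearly in $n$. The isolated $\mc P$-components $q^i$ with $i\in I$ correspond to boundary arcs that are not connected to any interior $\mc P$-cell of the same coset, and a counting argument over the finite set $\Omega$ of $\mc P$-letters appearing in the relators bounds $\sum_{i\in I}|q^i_-,q^i_+|_S$ by a constant times the area, hence by $Kn$. The linear factor in $n$ comes from the isoperimetric inequality, not from summing a fixed BCP constant over the sides.
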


We now recall a result of Hruska~\cite{HK08} and another of
Dru\c{t}u--Sapir~\cite{DrSa08} on the relation between the geometry
of $\bar \Gamma$ and the Cayley graph $\Gamma$ of $G$ with
respect to $S$.

\begin{definition}
Let $p$ be a geodesic in $\Gamma$, and let $\epsilon, R$ be
positive integers. Let $w\in W$ be a peripheral left coset. An
\emph{$(\epsilon, R)$-segment in $w$} of $p$ is a maximal
subpath such that all its vertices are {$(\epsilon, R)$-deep}
in $w$. Note that an $(\epsilon, R)$-segment could consist of a
single vertex.
\end{definition}

\begin{definition}
Edge-paths $p$ and $q$ in $\bar \Gamma$ are \emph{$K$-similar}
if $|p_-,q_-|_S\leq K$ and $|p_+,q_+|_S\leq K$.
\end{definition}

\begin{proposition}[{see \cite[Proposition~8.13]{HK08}}]  \label{prop:transition}\label{cor:segments}
There are constants $\epsilon$, $R$ satisfying
Lemma~\ref{lem:uniqueness},
 and a constant $K$ such that the following holds. Let $p$ be a geodesic in $\Gamma$ and let $\bar p$ be a geodesic in $\bar \Gamma$ with the same endpoints as $p$.
 \begin{enumerate}[(i)]
 \item  The set of  vertices of $\bar p$ is at Hausdorff
     distance at most $K$ from the set of $(\epsilon,
     R)$-transition vertices of $p$, in the metric
     $|\cdot,\cdot|_S$.
 \item If $p[j,k]$ is an $(\epsilon, R)$-segment in $w$ of
     $p$, then there are vertices $\bar p_m$ and $\bar p_n$
     of $\bar p$ such that $|p_j, \bar p_m|_S\leq K$ and
     $|p_k, \bar p_n|_S \leq K$.
 \item For any subpath $\bar p[m,n]$ of $\bar p$ with
     $m\leq n$ there is a $K$-similar subpath $p[j,k]$
     of~$p$ with $j\leq k$.
   \end{enumerate}
\end{proposition}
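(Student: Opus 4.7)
The plan is to contract each $(\epsilon,R)$-segment of $p$ into a single peripheral edge of $\bar\Gamma$, producing an auxiliary path $\tilde p$ in $\bar\Gamma$ from $p_-$ to $p_+$, and then to show that $\tilde p$ is aligned with $\bar p$ peripheral edge by peripheral edge. Concretely, one enumerates the maximal $(\epsilon,R)$-segments $p[j_1,k_1],\ldots,p[j_s,k_s]$, which by Lemma~\ref{lem:uniqueness} sit in well-defined distinct peripheral cosets $w_1,\ldots,w_s$, and replaces each $p[j_i,k_i]$ by the single $w_i$-edge joining $p_{j_i}$ to $p_{k_i}$; the transition portions of $p$ are kept as runs of $S$-edges. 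The constants $\epsilon,R$ start at the values supplied by Lemma~\ref{lem:uniqueness} and will be enlarged during the proof.

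The heart of the argument is the alignment of $\tilde p$ with $\bar p$. I would consider the closed loop $\tilde p\cdot\bar p^{-1}$ in $\bar\Gamma$ and split it into a polygon whose sides are either peripheral edges (of $\tilde p$ or of $\bar p$) or $\bar\Gamma$-geodesics joining consecutive junctions; where a subpath of $\tilde p$ or $\bar p$ between putative matched peripheral edges is not already $\bar\Gamma$-geodesic, I replace it by a $\bar\Gamma$-geodesic, paying a bounded $S$-cost controlled by Theorem~\ref{thm:OsinTriangle}. Any peripheral edge that fails to pair with one in the opposite side in the same coset becomes an isolated $\mc P$-component of this polygon, and Theorem~\ref{thm:n-gon} bounds its $S$-length by a constant $C$ depending only on the polygon size. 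Choosing $R>C$ forces every peripheral edge of $\tilde p$ (which, coming from an $(\epsilon,R)$-segment, has $S$-length at least $R$) to be paired with a peripheral edge of $\bar p$ in the same coset; by symmetry every peripheral edge of $\bar p$ is similarly paired. The pairing is then a bijection, and the matched endpoints lie at $S$-distance at most some universal $K_0$.

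Once the alignment is established, the three conclusions follow routinely. Corresponding $S$-subpaths of $\tilde p$ and $\bar p$ between matched peripheral edges are $\bar\Gamma$-geodesics with endpoints at $S$-distance at most $K_0$, so Theorem~\ref{thm:OsinTriangle} places their vertex sets at bounded $S$-Hausdorff distance; since the transition vertices of $p$ are exactly the vertices of $\tilde p$ outside the peripheral edges, this yields~(i). For~(ii), the endpoints $p_{j_i},p_{k_i}$ of an $(\epsilon,R)$-segment are the endpoints of a peripheral edge of $\tilde p$, matched to a peripheral edge of $\bar p$ whose endpoints provide the required $\bar p_m,\bar p_n$. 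For~(iii), one pulls a subpath $\bar p[m,n]$ back through the matching to a subpath of $\tilde p$ and then, reversing the contraction, to a $K$-similar subpath of $p$. The principal obstacle is setting up the polygon in the alignment step: Theorem~\ref{thm:n-gon} demands that its non-$\mc P$-component sides be $\bar\Gamma$-geodesic, which neither $\tilde p$ nor $\bar p$ provides automatically, and controlling the total $S$-error introduced by geodesifying these sides is the main technical burden.
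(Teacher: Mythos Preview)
Your approach differs substantially from the paper's, and it has a real gap.

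The paper does not reprove the alignment of $p$ with $\bar p$ from scratch. Part~(i) is taken directly from Hruska \cite[Proposition~8.13]{HK08}, with only a cosmetic adjustment (increasing $R$ by~$1$ so that transition \emph{points} become transition \emph{vertices} and distinct $(\epsilon,R)$-segments are separated by a transition vertex). Part~(ii) is then immediate: the endpoints of an $(\epsilon,R)$-segment are adjacent to transition vertices, and~(i) supplies nearby vertices of $\bar p$. For part~(iii), the paper enlarges $K$ to also satisfy Theorem~\ref{thm:OsinTriangle}, uses~(i) to find $p_k$ with $|\bar p_n,p_k|_S\leq K$, and applies Theorem~\ref{thm:OsinTriangle} once to the $\bar\Gamma$-triangle on $p_0,\bar p_n,p_k$ to locate a suitable $p_j$ close to $\bar p_m$. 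The entire argument is a few lines.

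Your plan would essentially reprove Hruska's result, and the alignment step as you describe it does not go through. The polygon you assemble from $\tilde p\cdot\bar p^{-1}$ has a number of sides $n$ proportional to the number of $(\epsilon,R)$-segments of $p$ plus the number of $\mc P$-edges of $\bar p$; this is not bounded independently of $p$. Theorem~\ref{thm:n-gon} yields only $\sum_{i\in I}|q^i_-,q^i_+|_S\leq Kn$, so your constant $C$ grows with $n$ and there is no universal $R>C$ to choose. The standard repair is to show first that $\tilde p$ is a uniform quasigeodesic in $\bar\Gamma$ (this is essentially what Hruska does, via the bounded coset penetration property) and then invoke stability of quasigeodesics in the hyperbolic graph $\bar\Gamma$; a single polygon count does not suffice. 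Two smaller issues: an $(\epsilon,R)$-segment may be a single vertex, so its replacement edge need not have $S$-length $\geq R$ as you claim; and $p_{j_i},p_{k_i}$ lie only within $\epsilon$ of $w_i$, not in $w_i$ itself, so there is no $w_i$-edge joining them without short detours.
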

\begin{proof}
The existence of $\epsilon$, $R$, and $K$ satisfying
Lemma~\ref{lem:uniqueness} and (i) is~\cite[Proposition
8.13]{HK08}, except that Hruska considers the set of transition
points instead of vertices. However, after increasing his $R$
by~$1$, we obtain the current statement, and moreover each pair
of distinct $(\epsilon, R)$-segments is separated by a
transition vertex. Consequently, by increasing $K$ by $1$, we
obtain (ii).

For the proof of (iii), increase $K$ so that it satisfies
Theorem~\ref{thm:OsinTriangle}. We will show that $3K$
satisfies statement (iii). By (i), there is a vertex $p_{k}$
such that $|\bar p_n, p_{k}|_S\leq K$. Let $q,\bar{q}$ be
geodesics in $\Gamma,\bar\Gamma$ from $\bar p_n$ to $p_{k}$.
Let $\bar r$ be a geodesic in $\bar \Gamma$ from $p_0$ to
$p_{k}$. Consider the geodesic triangle in $\bar \Gamma$ with
sides $\bar p[0, n]$, $\bar q$, and $\bar r$. By
Theorem~\ref{thm:OsinTriangle}, there is a vertex $v$ of $\bar
q \cup \bar r$ such that $|\bar p_m, v|_S\leq K$.

Suppose first that $v$ lies in $\bar r$. By (i) there is a
vertex $p_{j}$ of $p[0,k]$ such that $|p_{j},v|_S\leq K$. It
follows that $|p_{j}, \bar p_m|_S\leq 2K$. Now suppose that $v$
lies in $\bar q$. By (i) the vertex $v$ is at $S$-distance
$\leq K$ from a vertex of $q$. Since $l(q)\leq K$, it follows
that $|p_{k}, \bar p_m|_S \leq 3K$, and we can assign $j=k$.
\end{proof}

\begin{lemma}[{Quasiconvexity, \cite[Lemma 4.15]{DrSa08}}]
\label{lem:quasiconvexity}
There is $K>0$  such that the following holds. Let $w\in W$ be
a peripheral left coset, let $A$ be a positive integer, and let
$p$ be a geodesic in $\Gamma$ with $|p_-, w|_S\leq A$ and
$|p_+, w|_S\leq A$.  Then any vertex $p_i$ of $p$ satisfies
$|p_i, w|_S\leq KA$.
\end{lemma}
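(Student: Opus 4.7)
The plan is first to show that the $\Gamma$-length $l(p) = |p_-, p_+|_S$ is bounded linearly in~$A$. Once this is established, the lemma follows immediately: choosing any $v_- \in w$ with $|p_-, v_-|_S \leq A$, the $\Gamma$-triangle inequality gives $|p_i, w|_S \leq |p_i, v_-|_S \leq i + A \leq l(p) + A$ for every vertex $p_i$ of $p$, so a bound $l(p) \leq CA$ translates into $|p_i, w|_S \leq (C+1) A$.

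To bound $l(p)$ I would invoke the polygon theorem (Theorem~\ref{thm:n-gon}) in the Cayley graph $\bar\Gamma$. Pick $v_-, v_+ \in w$ realizing $|p_\pm, v_\pm|_S \leq A$, let $\bar s_\pm$ be $\bar\Gamma$-geodesics from $p_\pm$ to $v_\pm$, and let $\bar p$ be a $\bar\Gamma$-geodesic from $p_-$ to $p_+$. Let $e$ denote the $w$-edge in $\bar\Gamma$ from $v_-$ to $v_+$; if $v_- = v_+$ then $e$ is empty and $l(p) \leq 2A$ follows directly from the triangle inequality. The path $\bar s_- \cdot e \cdot \bar s_+^{-1}$ has $\bar\Gamma$-length at most $2A + 1$, so $l(\bar p) \leq 2A + 1$. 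Form the closed $4$-gon $\Delta = \bar s_- \cdot e \cdot \bar s_+^{-1} \cdot \bar p^{-1}$. In the generic case where $e$ is an isolated $w$-component of $\Delta$, Theorem~\ref{thm:n-gon} with $n = 4$ and $I = \{2\}$ yields $|v_-, v_+|_S \leq 4K$, whence $l(p) \leq |p_-, v_-|_S + |v_-, v_+|_S + |v_+, p_+|_S \leq 2A + 4K$, which is linear in~$A$.

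The main obstacle is that one of $\bar s_-, \bar s_+, \bar p$ may itself carry an additional $w$-edge, making $e$ non-isolated. A helpful observation is that a $\bar\Gamma$-geodesic can contain at most one $w$-component: two $w$-edges separated by a non-$w$-subpath could be short-cut through a single $w$-edge joining their $w$-endpoints, contradicting minimality. Hence $\Delta$ has only $O(1)$ extra $w$-components, each with both endpoints in~$w$. By Proposition~\ref{cor:segments}(iii), each extra $w$-component inside $\bar p$ corresponds to a $K$-similar subpath of $p$, supplying a vertex $p_k$ with $|p_k, w|_S \leq K$. Splitting $p$ at these vertices partitions it into $O(1)$ subpaths whose endpoints are all at $S$-distance $O(\max(A, K))$ from~$w$; for each subpath one can choose $\bar\Gamma$-geodesics avoiding $w$-edges (using the sub-pieces of $\bar p$ lying between its $w$-edges), so the isolated-case bound applies piecewise and summing yields $l(p) = O(A)$. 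A symmetric argument, absorbing any $w$-edge appearing at the end of $\bar s_\pm$ into a larger ``extended $e$'', handles the remaining configurations.
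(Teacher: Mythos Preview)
The very first claim --- that $l(p)=|p_-,p_+|_S$ is bounded linearly in $A$ --- is false, and this sinks the whole strategy. Peripheral left cosets have unbounded diameter in the $S$-metric: pick any two elements $p_-,p_+$ of a single coset $w=gP_\lambda$ (so $|p_\pm,w|_S=0\leq A$ for every positive $A$) and note that $|p_-,p_+|_S$ can be made arbitrarily large by taking $p_-^{-1}p_+$ far out in $P_\lambda$. The lemma says only that the geodesic $p$ stays in a tube of radius $KA$ around $w$; it does \emph{not} say $p$ is short, and indeed $p$ may run alongside $w$ for an arbitrarily long time.

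Your polygon argument does not escape this. In the example just given, the $\bar\Gamma$-geodesic $\bar p$ from $p_-$ to $p_+$ is itself a single $w$-edge, so $e$ is never isolated in the $4$-gon $\Delta$. Your fallback --- using Proposition~\ref{cor:segments}(iii) to split $p$ at the vertices coming from the $w$-edge of $\bar p$ --- hands back $p_-$ and $p_+$ themselves (the ``sub-pieces of $\bar p$ lying between its $w$-edges'' are trivial), so nothing has been reduced and the promised piecewise bound never gets off the ground. Incidentally, the paper does not prove this lemma at all; it is quoted from \cite[Lemma~4.15]{DrSa08}, where the argument goes through the tree-graded structure of asymptotic cones rather than through a length bound on $p$.
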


\begin{assumption} \label{constants}
From here on, the constants $(\epsilon, R, K)$ are assumed to
satisfy the statement of Proposition~\ref{prop:transition}. By
increasing $K$, we also assume that $K$ satisfies the
conclusions of Theorems~\ref{thm:OsinTriangle}
and~\ref{thm:n-gon}, the quasiconvexity
Lemma~\ref{lem:quasiconvexity}, and $K\geq \max\{\epsilon,
R\}$.
\end{assumption}

We conclude with the following application of
Theorem~\ref{thm:n-gon}.

\begin{lemma} \label{lem:deep2}
Let $p$ be a geodesic in $\Gamma$, and let $\bar p$ be a
geodesic in $\bar \Gamma$ with the same endpoints as $p$. Let
$p[j,k]$ be an $(\epsilon, R)$-segment of a peripheral left
coset $w\in W$. If $k-j>8K^2$, then $\bar p$ contains a
$w$-edge which is $5K^2$-similar to $p[j,k]$.
\end{lemma}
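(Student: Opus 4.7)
The plan is to apply Proposition~\ref{prop:transition}(ii) to produce $\bar p$-vertices near $p_j, p_k$, and then use Theorem~\ref{thm:n-gon} on a quadrilateral in $\bar\Gamma$ that traps a $w$-edge. The argument has two phases: (A) show that $\bar p$ contains a $w$-edge between the chosen $\bar p$-vertices, and (B) show this $w$-edge is $5K^2$-similar to $p[j,k]$.

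For phase (A), by Proposition~\ref{prop:transition}(ii) pick $\bar p_m, \bar p_n$ on $\bar p$ with $|p_j, \bar p_m|_S \leq K$ and $|p_k, \bar p_n|_S \leq K$. Pick $u_j, u_k \in w$ with $|u_j, p_j|_S, |u_k, p_k|_S \leq \epsilon$. Form the quadrilateral in $\bar\Gamma$ whose sides are $\bar p[m, n]$, a $\bar\Gamma$-geodesic $\gamma_2$ from $\bar p_n$ to $u_k$, the $w$-edge $u_k u_j$, and a $\bar\Gamma$-geodesic $\gamma_4$ from $u_j$ to $\bar p_m$. Any $w$-edge on a $\bar\Gamma$-geodesic is unique and, when present on $\gamma_2$ or $\gamma_4$, must be adjacent to the $w$-endpoint $u_k$ or $u_j$ (otherwise one could shortcut through $w$). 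We therefore absorb such $w$-edges into a longer (at most three-edge) $w$-subpath, which serves as a single $w$-component of the resulting polygon, flanked by shorter $\bar\Gamma$-geodesic subpaths free of $w$-edges. If $\bar p[m, n]$ contains no $w$-edge, this central $w$-component is isolated, and Theorem~\ref{thm:n-gon} bounds the $S$-distance between its endpoints by at most $4K$; but this contradicts the lower bound $(k-j) - 2\epsilon > 8K^2 - 2K$ coming from the triangle inequality. Hence $\bar p[m, n]$ contains a $w$-edge; by uniqueness of $w$-edges on the $\bar\Gamma$-geodesic $\bar p$, this is the only one, and we denote its endpoints by $\bar p_{m'}, \bar p_{m'+1}$.

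For phase (B), to bound $|p_j, \bar p_{m'}|_S$ (the bound on $|p_k, \bar p_{m'+1}|_S$ is symmetric), consider the triangle in $\bar\Gamma$ with sides $\bar p[m, m']$, the $w$-edge $\bar p_{m'} u_j$, and a $\bar\Gamma$-geodesic from $u_j$ back to $\bar p_m$. Since the unique $w$-edge on $\bar p$ lies outside the prefix $\bar p[m, m']$, this prefix has no $w$-edge. After the same absorption procedure handles any $w$-edge on the short closing side, Theorem~\ref{thm:n-gon} bounds the $S$-distance between the endpoints of the resulting isolated $w$-component. Combined with $|u_j, p_j|_S \leq \epsilon$, $|\bar p_m, p_j|_S \leq K$, and the quasiconvexity Lemma~\ref{lem:quasiconvexity} to translate residual $\bar\Gamma$-distance bounds into $S$-distance bounds (which introduces an extra factor of $K$), this gives $|p_j, \bar p_{m'}|_S \leq 5K^2$.

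The main obstacle is the careful bookkeeping required to ensure that after any $w$-edges on the short sides of the polygons are absorbed into the central $w$-component, the resulting $w$-subpath remains a valid isolated $\mc P$-component to which Theorem~\ref{thm:n-gon} applies. A secondary subtlety is translating $\bar\Gamma$-distance bounds (produced by the polygon arguments) into $S$-distance bounds via quasiconvexity of peripheral cosets, which accounts for the quadratic-in-$K$ constant $5K^2$.
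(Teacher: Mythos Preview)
Your overall structure is correct---use Proposition~\ref{prop:transition}(ii) to find $\bar p_m,\bar p_n$, build a polygon in $\bar\Gamma$ with a $w$-side, and apply Theorem~\ref{thm:n-gon}---but the choice of $\bar\Gamma$-geodesics for the short sides $\gamma_2,\gamma_4$ creates a gap that your absorption procedure does not close. After you absorb a terminal $w$-edge of $\gamma_2$ into the central $w$-component, the new endpoint $v_2$ of that component is only $\bar\Gamma$-adjacent to $u_k$; you have no control over $|v_2,u_k|_S$. So the bound $|v_2,v_4|_S\leq 4K$ from Theorem~\ref{thm:n-gon} cannot be compared to your lower bound $(k-j)-2\epsilon$ on $|u_j,u_k|_S$, and no contradiction follows. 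The same issue recurs in phase~(B). Your appeal to Lemma~\ref{lem:quasiconvexity} does not help: that lemma concerns $\Gamma$-geodesics whose endpoints are $S$-close to $w$, and bounds the $S$-distance of intermediate vertices to $w$; it says nothing about $S$-distances between two specified points, nor about vertices on $\bar\Gamma$-geodesics.

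The paper avoids this by taking the short sides to be geodesics in $\Gamma$, i.e.\ $S$-labelled paths of length at most $2K$. Because the alphabet is the disjoint union $S\sqcup\bigsqcup_\lambda P_\lambda$, an $S$-edge is never a $\mc P$-component, so the $w$-edge $r$ is automatically isolated and no absorption is needed. The price is that these short sides are not $\bar\Gamma$-geodesics, so one must subdivide them into their $\leq 2K$ individual edges, yielding a polygon with roughly $6K$ sides; Theorem~\ref{thm:n-gon} then gives a bound of order $K^2$. This, not quasiconvexity, is the source of the quadratic constant $5K^2$. The same device handles phase~(B): one builds a polygon with sides $\bar p[m,m']$, a single $w$-edge from $\bar p_{m'}$ to a chosen point of $w$, and the subdivided short $\Gamma$-geodesic back to $\bar p_m$, giving an isolated $w$-component with $S$-length at most $K(2K+2)$.
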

\begin{proof}
There are vertices $r_-$ and $r_+$ of $w$ such that
$|p_j,r_-|_S\leq \epsilon\leq K$ and $|p_k, r_+|_S \leq
\epsilon\leq K.$ Let $r$ be a $w$-edge from $r_-$ to~$r_+$. By
Proposition~\ref{prop:transition}(ii), there are vertices
$\bar{p}_m,\bar{p}_n$ at $S$-distance at most $K$ from
$p_j,p_k$, respectively. Let $[r_-, \bar p_m]$ and $[\bar p_n,
r_-]$ be geodesics in $\Gamma$ between the corresponding
vertices; note that the labels of these paths are words in the
alphabet $S$, and they both have length at most~$2K$.

Suppose for contradiction that $\bar p$ does not contain a
$w$-edge. Consider the closed path $[r_-, \bar p_m]  \bar
p[m,n]  [\bar p_n, r_+] r$, viewed as a polygon $\Delta$
obtained by subdividing $[r_-, \bar p_m]$ and $[\bar p_n, r_+]$
into edges. Since $\bar p[m,n]$ is a geodesic in $\bar \Gamma$,
the number of sides of $\Delta$ is at most $2+|r_-,\bar
p_m|_S+|r_+,\bar p_n|_S \leq 6K$. We have that $r$ is an
isolated $w$-component of $\Delta$. Then
Theorem~\ref{thm:n-gon} implies that $|r_-, r_+|_S \leq 6K^2$.
It follows that $|p_j, p_k|_S \leq 8K^2$. This is a
contradiction, hence $\bar p[m,n]$ contains a $w$-edge $t$.

Now we prove that $|t_-, p_j|_S\leq 5K^2$. Let $[\bar p_m,
t_-]$ be the subpath of $\bar p[m,n]$ from $\bar p_m$ to $t_-$,
and note that this is a geodesic in $\bar \Gamma$ without
$w$-components. Let $[t_-,r_-]$ be a $w$-edge from $t_-$ to
$r_-$. Consider the polygon $[\bar p_m, t_-] [t_-,r_-]
[r_-,\bar p_m]$, where the path $[r_-,\bar p_m]$ is subdivided
into at most $2K$ edges. Observe that $[t_-,r_-]$ is an
isolated $w$-component. Theorem~\ref{thm:n-gon} implies that
$|t_-, p_j|_S \leq |t_-, r_-|_S+|r_-, p_j|_S \leq K(2K+2)+K\leq
5K^2$.

Analogously one proves that $|t_+, p_k|_S\leq 5K^2$.
\end{proof}

\subsection{Proof}

We are now ready to start the proof of
Theorem~\ref{thm:stronglythin}. Let $D=53K^2$.

Let $a,b,c\in V\cup W$ with $a\neq b$, and let $p^{ab}, p^{bc},
p^{ac}$ be geodesics in $\Gamma$ from $a$ to $b$, from $b$ to
$c$, and from $a$ to $c$, respectively. Let $\ell=|a,b|_S$ and
let $0\leq i \leq \ell $. If $p^{ab}_i$ is an $(\epsilon,
R)$-deep vertex of $p^{ab}$ in the peripheral left coset~$w$
then let $z=w$, otherwise let $z=p^{ab}_i$.

We define the following paths illustrated in
Figure~\eqref{fig:hexagon}.
\begin{figure}
\begin{picture}(0,0)%
\includegraphics{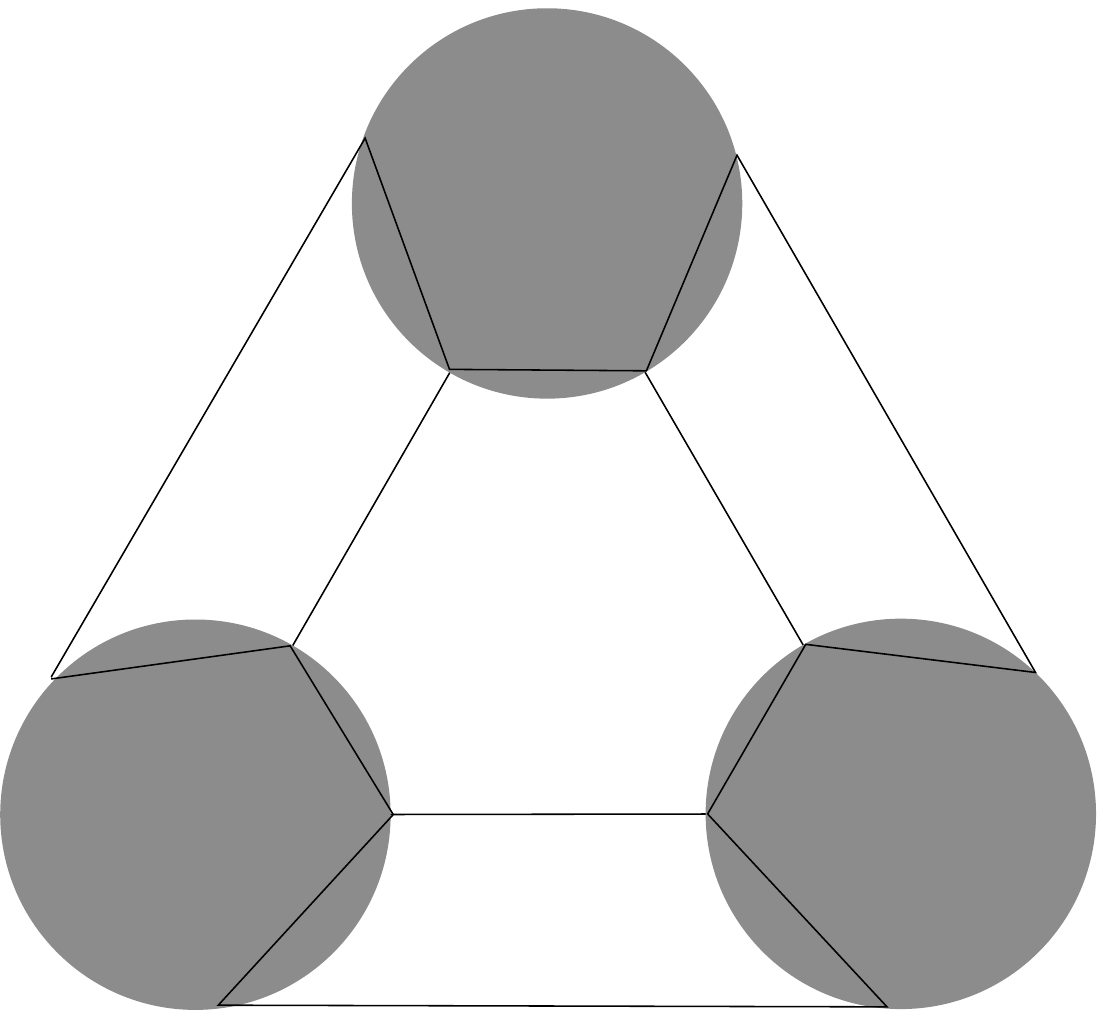}%
\end{picture}%
%
%  Created by WinFIG version 6.0 final
%  METADATA <version>1.0</version>
%
\setlength{\unitlength}{3947sp}%
\begingroup\makeatletter\ifx\SetFigFont\undefined%
\gdef\SetFigFont#1#2#3#4#5{%
  \reset@font\fontsize{#1}{#2pt}%
  \fontfamily{#3}\fontseries{#4}\fontshape{#5}%
  \selectfont}%
\fi\endgroup%
\begin{picture}(5261,5219)(905,-4708)
%  METADATA <id>89</id>
\put(4011,-2106){\makebox(0,0)[lb]{\smash{{\SetFigFont{17}{20.4}{\rmdefault}{\mddefault}{\updefault}{\color[rgb]{0,0,0}$q^{bc}$}%
}}}}
%  METADATA <id>85</id>
\put(3314,-4600){\makebox(0,0)[lb]{\smash{{\SetFigFont{17}{20.4}{\rmdefault}{\mddefault}{\updefault}{\color[rgb]{0,0,0}$p^{ab}$}%
}}}}
%  METADATA <id>86</id>
\put(1460,-1509){\makebox(0,0)[lb]{\smash{{\SetFigFont{17}{20.4}{\rmdefault}{\mddefault}{\updefault}{\color[rgb]{0,0,0}$p^{ac}$}%
}}}}
%  METADATA <id>87</id>
\put(5305,-1518){\makebox(0,0)[lb]{\smash{{\SetFigFont{17}{20.4}{\rmdefault}{\mddefault}{\updefault}{\color[rgb]{0,0,0}$p^{bc}$}%
}}}}
%  METADATA <id>88</id>
\put(3412,-3228){\makebox(0,0)[lb]{\smash{{\SetFigFont{17}{20.4}{\rmdefault}{\mddefault}{\updefault}{\color[rgb]{0,0,0}$q^{ab}$}%
}}}}
%  METADATA <id>90</id>
\put(2709,-2103){\makebox(0,0)[lb]{\smash{{\SetFigFont{17}{20.4}{\rmdefault}{\mddefault}{\updefault}{\color[rgb]{0,0,0}$q^{ac}$}%
}}}}
%  METADATA <id>91</id>
\put(2027,-3934){\makebox(0,0)[lb]{\smash{{\SetFigFont{17}{20.4}{\rmdefault}{\mddefault}{\updefault}{\color[rgb]{0,0,0}$s^a$}%
}}}}
%  METADATA <id>92</id>
\put(4877,-3892){\makebox(0,0)[lb]{\smash{{\SetFigFont{17}{20.4}{\rmdefault}{\mddefault}{\updefault}{\color[rgb]{0,0,0}$s^b$}%
}}}}
%  METADATA <id>93</id>
\put(5209,-2951){\makebox(0,0)[lb]{\smash{{\SetFigFont{17}{20.4}{\rmdefault}{\mddefault}{\updefault}{\color[rgb]{0,0,0}$t^b$}%
}}}}
%  METADATA <id>94</id>
\put(3998,-736){\makebox(0,0)[lb]{\smash{{\SetFigFont{17}{20.4}{\rmdefault}{\mddefault}{\updefault}{\color[rgb]{0,0,0}$t^c$}%
}}}}
%  METADATA <id>95</id>
\put(2910,-688){\makebox(0,0)[lb]{\smash{{\SetFigFont{17}{20.4}{\rmdefault}{\mddefault}{\updefault}{\color[rgb]{0,0,0}$u^c$}%
}}}}
%  METADATA <id>96</id>
\put(1525,-2921){\makebox(0,0)[lb]{\smash{{\SetFigFont{17}{20.4}{\rmdefault}{\mddefault}{\updefault}{\color[rgb]{0,0,0}$u^a$}%
}}}}
%  METADATA <id>98</id>
\put(2242,-3185){\makebox(0,0)[lb]{\smash{{\SetFigFont{17}{20.4}{\rmdefault}{\mddefault}{\updefault}{\color[rgb]{0,0,0}$r^a$}%
}}}}
%  METADATA <id>99</id>
\put(4586,-3134){\makebox(0,0)[lb]{\smash{{\SetFigFont{17}{20.4}{\rmdefault}{\mddefault}{\updefault}{\color[rgb]{0,0,0}$r^b$}%
}}}}
%  METADATA <id>100</id>
\put(3473,-1212){\makebox(0,0)[lb]{\smash{{\SetFigFont{17}{20.4}{\rmdefault}{\mddefault}{\updefault}{\color[rgb]{0,0,0}$r^c$}%
}}}}
%  METADATA <id>101</id>
\put(3474,216){\makebox(0,0)[lb]{\smash{{\SetFigFont{20}{24.0}{\rmdefault}{\mddefault}{\updefault}{\color[rgb]{0,0,0}$c$}%
}}}}
%  METADATA <id>103</id>
\put(1133,-3828){\makebox(0,0)[lb]{\smash{{\SetFigFont{20}{24.0}{\rmdefault}{\mddefault}{\updefault}{\color[rgb]{0,0,0}$a$}%
}}}}
%  METADATA <id>104</id>
\put(5799,-3843){\makebox(0,0)[lb]{\smash{{\SetFigFont{20}{24.0}{\rmdefault}{\mddefault}{\updefault}{\color[rgb]{0,0,0}$b$}%
}}}}
\end{picture}%
\caption{Paths in the proof of the thin triangle theorem}\label{fig:hexagon}
\end{figure}
Let $\bar p^{ab}, \bar p^{bc}, \bar p^{ac}$ be geodesics in
$\bar \Gamma$ with the same endpoints as $p^{ab}$, $p^{bc}$,
$p^{ac}$, respectively. If $a\in W$ and $\bar{p}^{ab}$ starts
with an $a$-edge, then we call this edge $s^a$; otherwise let
$s^a$ be the trivial path. We define $s^b$ analogously. Then
$\bar p^{ab}$ is a concatenation
\[ \bar p^{ab} = s^a q^{ab} s^b. \]
Similarly, the paths $\bar p^{ac},\bar p^{bc}$ can be expressed
as concatenations
\begin{equation} \nonumber \bar p^{ac} = u^a q^{ac} u^c, \quad \quad  \bar p^{bc} = t^b q^{bc} t^c.\end{equation}
Let $r^a$ be a path in $\bar{\Gamma}$ from $u^a_+$ to $s^a_+$
that is a single $a$-edge if $u^a_+\neq s^a_+$, or the trivial
path otherwise. We define $r^b, r^c$ analogously. Let $\Pi$ be
the geodesic hexagon in $\bar \Gamma$ given by
\[ \Pi = r^a q^{ab} r^b q^{bc} r^c q^{ca} .\]

\begin{step}
\label{step:first}
Paths $p^{ab}$ and $q^{ab}$ are $2K$-similar. The same is true
for the pair $p^{ac}$ and $q^{ac}$, and the pair $p^{bc}$ and
$q^{bc}$. \label{eq:01}
\end{step}

\begin{proof}
By Proposition~\ref{prop:transition}(i), there is a vertex
$p^{ab}_j$ such that $|s^a_+, p^{ab}_j|_S \leq K$.  Since
$p^{ab}$ is a geodesic from $a$ to $b$ in $\Gamma$, it follows
that $j\leq K$, and consequently $|s^a_-, s^a_+|_S \leq 2K$.
The remaining assertions are proved analogously.
\end{proof}

In view of Proposition~\ref{prop:transition}(i), there is a
vertex of $\bar p^{ab}$ at $S$-distance $\leq K$ from
$p^{ab}_i$. While this vertex might be $s_-^a$ or $s_-^b$,
Step~\ref{step:first} guarantees that there is a vertex
$q^{ab}_h$ at $S$-distance $\leq 3K$ from $p^{ab}_i$.

The following step should be considered as the bigon case of
Theorem~\ref{thm:stronglythin}.

\begin{step}
\label{Step:combined} If $q^{ac}$ contains a
vertex in $b$, in the case where $b\in W$, or equal to $b$, in the case where $b\in V$, then $|z , p^{ac}_i |_S< D$.

Similarly, if  $q^{bc}$ contains a vertex in $a$, in the case where $a\in W$, or equal to $a$, in the case where $a\in V$, then $|z ,
p^{bc}_{\ell-i} |_S< D$.
\end{step}

Note that here we keep the convention that for $i>l(p^{ac})$
the vertex $p^{ac}_i$ denotes $p^{ac}_+$, and similarly if
$\ell-i>l(p^{bc})$, then the vertex $p^{bc}_{\ell-i}$ denotes
$p^{bc}_+$.

\begin{proof}
By symmetry, it suffices to prove the first assertion. We focus on the case $b\in W$,   the case $b\in V$ follows by considering $x$ below as the trivial path.

Let $q_n^{ac}$ be the first vertex of $q^{ac}$ in $b$. Let $x$
be a $b$-edge joining $q^{ab}_+$ to $q_n^{ac}$. The edges $r^a$
and $x$ are isolated $\mc P$-components of the $4$-gon
$r^aq^{ab}xq^{ac}[n,0]$. By Theorem~\ref{thm:n-gon}, we have
$|r^a_-,r^a_+|_S,|x_-,x_+|_S\leq 4K$. Consequently by
Step~\ref{step:first} we have $|p_-^{ab},p_-^{ac}|_S\leq 8K$.

First consider the case, where $p^{ab}_i$ is an $(\epsilon,
R)$-transition vertex of $p^{ab}$. Let $q^{ab}_h$ be the vertex
defined after Step~\ref{step:first}. Subdividing the $4$-gon
$r^aq^{ab}xq^{ac}[n,0]$ into two geodesic triangles in $\bar
\Gamma$, and applying twice Theorem~\ref{thm:OsinTriangle},
gives $h^*$ such that $|q^{ab}_h, q^{ac}_{h^*}|_S \leq 6K$. By
Proposition~\ref{prop:transition}(i), there is $i^*$ such that
$|q^{ac}_{h^*}, p^{ac}_{i^*}|_S \leq K$. It follows that
$|p^{ab}_i, p^{ac}_{i^*}|_S \leq 3K+6K+K=10K$ and hence
$|i-i^*| \leq 8K+10K=18K$. Therefore $|p^{ab}_i, p^{ac}_i|_S
\leq 10K+18K$.

Now consider the case, where $p^{ab}_i$ is an $(\epsilon,
R)$-deep vertex of $p^{ab}$ in the peripheral left coset $w$.
Then $p^{ab}_i$ lies in an $(\epsilon, R)$-segment
$p^{ab}[j,k]$ of $p^{ab}$ in $w$. Thus $\max\{|p^{ab}_j, w|_S,
|p^{ab}_k,w|_S\} \leq \epsilon \leq K.$ By
Proposition~\ref{cor:segments}(ii) and Step~\ref{step:first},
there is a vertex $q^{ab}_m$ such that $|p^{ab}_j,
q^{ab}_m|_S\leq 3K$. As in the previous case, we obtain $j^*$
such that $|p^{ab}_j, p^{ac}_{j^*}|_S \leq 10K$. Analogously,
there is $k^*$ such that $|p^{ab}_k, p^{ac}_{k^*}|_S \leq 10K$.
In particular, we have $\max\left\{|p^{ac}_{j^*},w|_S,
|p^{ac}_{k^*},w|_S \right\}\leq 11K$. By quasiconvexity of $w$,
Lemma~\ref{lem:quasiconvexity}, every vertex of
$p^{ac}[j^*,k^*]$ is at $S$-distance $\leq 11K^2$ from $w$.
Moreover, we have $|j-j^*|\leq 8K+10K$, and analogously,
$|k-k^*|\leq 18K$. Hence $i$ is at distance $\leq 18K$ from the
interval $[j^*,k^*]$. (Note that we might have $j^*>k^*$, but
that does not change the reasoning.) It follows that
$|w,p^{ac}_i|_S \leq 11K^2+18K.$
\end{proof}

By Step~\ref{Step:combined}, we can assume that $a\neq c$ and
$b\neq c$. Moreover, we can assume that there is no
$b$-component in $q^{ac}$, nor an $a$-component in $q^{bc}$.
Consequently,  we can apply Theorem~\ref{thm:n-gon} to $\Pi$,
viewing $r^a$ and $r^b$ as isolated components and the
remaining four sides as geodesic sides. It follows that
\begin{equation} \label{eq:added}
\max\{|r^a_-, r^a_+|_S ,  |r^b_-, r^b_+|_S \} \leq 6K.
\end{equation}
Together with Step~\ref{eq:01}, this implies
\begin{equation} \label{eq:02} \max\{|p^{ab}_-, p^{ac}_-|_S, |p^{ab}_+, p^{bc}_-|_S\} \leq 10K.\end{equation}

\begin{step}\label{lem:triangle2}
If $p^{ab}_i$ is an $(\epsilon, R)$-transition vertex of
$p^{ab}$ or an endpoint of an $(\epsilon, R)$-segment of
$p^{ab}$, then $|p^{ab}_i , p^{ac}_i |_S\leq 34K$ or
 $|p^{ab}_i , p^{bc}_{\ell-i} |_S \leq 34K$.
\end{step}
\begin{proof}

By Step~\ref{step:first}, the vertex $p^{ab}_i$ is at
$S$-distance $\leq 3K$ from some $q_h^{ab}$. We split the
hexagon $\Pi$ into four geodesic triangles in $\bar \Gamma$
using diagonals. Repeated application of
Theorem~\ref{thm:OsinTriangle} and inequality~\eqref{eq:added}
yields a vertex of $q^{ac}\cup q^{bc}$ at $S$-distance $\leq
8K$ from $q_h^{ab}$. Without loss of generality, suppose that
this vertex is $q^{ac}_{h^*}$. By
Proposition~\ref{prop:transition}(i), the vertex $q^{ac}_{h^*}$
is at $S$-distance $\leq K$ from some $p^{ac}_{i^*}$.
Consequently, $|p^{ab}_i,p^{ac}_{i^*}|_S\leq 12K$. By
inequality~\eqref{eq:02}, it follows that $|i-i^*|\leq 22K$.
Therefore $|p^{ab}_i, p^{ac}_i|_S\leq 34K$.
\end{proof}

To prove Theorem~\ref{thm:stronglythin}, it remains to consider
the case where $p^{ab}_i$ is an $(\epsilon, R)$-deep vertex of
$p$ in a peripheral left coset $w$. Let $p^{ab}[j,k]$ be the
$(\epsilon, R)$-segment of $p^{ab}$ in $w$ containing
$p^{ab}_i$.

Note that if $w=c$, then $|a,c|_S\leq j+\epsilon$. Consequently
$p_i^{ac}$ is at $S$-distance $\leq \epsilon$ from $p_+^{ac}\in w$,
and the theorem follows. Henceforth, we will assume $w\neq c$.

Also note that if $k-j \leq 18K^2$, then assuming without loss
of generality that Step~\ref{lem:triangle2} yields
$|p^{ab}_j,p^{ac}_j|_S\leq 34K$, we have:
$$|w,p^{ac}_i|_S\leq
|w,p^{ab}_j|_S+|p^{ab}_j,p^{ac}_j|_S+|p^{ac}_j,p^{ac}_i|_S\leq
\epsilon+34K+18K^2\leq D,$$ and the theorem is proved.
Henceforth, we assume $k-j> 18K^2$.

\begin{step}\label{lem:lastcases}
The path $q^{ab}$ has a $w$-component $q^{ab}[m,m+1]$ which is
$5K^2$-similar to $p^{ab}[j,k]$. Moreover, $q^{bc}$ or $q^{ac}$
has a $w$-component.
\end{step}
\begin{proof}
The first assertion follows from Lemma~\ref{lem:deep2}. In particular, $w$ is distinct from $a$ and $b$. For the
second assertion, suppose for contradiction that $q^{bc}$ and $q^{ac}$
have no $w$-components.
Consequently, since $w\neq c$, the $w$-edge $q^{ab}[m,m+1]$ is an isolated $w$-component of $\Pi$. We  apply
Theorem~\ref{thm:n-gon} with $\Pi$ interpreted as an 8-gon with a side $q^{ab}[m,m+1]$ as the only isolated component.    This contradicts $|q^{ab}_m,
q^{ab}_{m+1}|_S> 18K^2-2\cdot 5K^2\geq 8K$.
\end{proof}

\begin{step}\label{lem:another-case}
Suppose that $q^{ac}$ has a $w$-component $q^{ac}[n,n+1]$  and
$q^{bc}$ does not have a $w$-component. Then $|w , p^{ac}_i
|_S< D$. Similarly, if $q^{bc}$ has a $w$-component and
$q^{ac}$ does not have a $w$-component, then $|w ,
p^{bc}_{\ell-i} |_S< D$.
\end{step}

\begin{proof}
By symmetry, it suffices to prove the first assertion. Let $x$
be a $w$-edge in $\bar{\Gamma}$ from $q^{ab}_m$ to $q^{ac}_n$.
Consider the geodesic $4$-gon $r^aq^{ab}[0,m]xq^{ac}[n,0]$ in
$\bar{\Gamma}$. Observe that $x$ is an isolated $w$-component
and hence Theorem~\ref{thm:n-gon} implies that
$|q^{ab}_m,q^{ac}_n|_S \leq 4K$. Analogously, by considering a
geodesic $6$-gon, we obtain
$|q^{ab}_{m+1},q^{ac}_{n+1}|_S \leq 6K$.

Proposition~\ref{prop:transition}(iii) implies that
$q^{ac}[n,n+1]$ is $K$-similar to a subpath $p^{ac}[j^*,k^*]$
of $p^{ac}$. By Step~\ref{lem:lastcases}, the paths
$p^{ab}[j,k]$ and $p^{ac}[j^*,k^*]$ are $(5K^2+6K+K)$-similar,
hence $12K^2$-similar. By inequality~\eqref{eq:02}, it follows
that $|j-j^*|\leq 10K+12K^2\leq 22K^2$ and similarly
$|k-k^*|\leq 22K^2$. Hence $i\in [j^*-22K^2,k^*+22K^2]$. By
Lemma~\ref{lem:quasiconvexity}, we have $|p_i^{ac},w|_S\leq
K\cdot K+22K^2$.
\end{proof}

It remains to consider the case where both $q^{ac}$ and
$q^{bc}$ have $w$-components, which we denote by
$q^{ac}[n,n+1]$ and $q^{bc}[\tilde n,\tilde n+1]$.

The argument in the proof of Step~\ref{lem:another-case} shows
that
\begin{equation} \label{eq:06}\max\{ |q^{ab}_m,q^{ac}_n|_S,   |q^{ac}_{n+1}, q^{bc}_{\tilde n+1}|_S,
|q^{ab}_{m+1},q^{bc}_{\tilde n}|_S   \} \leq 4K.\end{equation}

By Proposition~\ref{cor:segments}(iii) there are integers
$0\leq\alpha\leq\beta\leq l(p^{ac})$ and $0\leq \gamma\leq \delta \leq l(p^{bc})$ such that $q^{ac}[n,n+1]$ and
$p^{ac}[\alpha, \beta]$ are $K$-similar, and $q^{bc}[\tilde n,\tilde n+1]$
and $p^{bc}[\gamma, \delta]$ are $K$-similar.

\begin{step}\label{lem:final}
We have
\[ \alpha -20K^2 \leq i \leq \beta +33K^2
\quad \text{ or  } \quad \gamma-20K^2 \leq \ell-i \leq \delta+33K^2.\]
\end{step}
\begin{proof}
By~inequality~\eqref{eq:06} and Step~\ref{lem:lastcases}, we
have the following estimates:
\begin{align*}
 |p^{ac}_\beta ,
p^{bc}_\delta|_S &\leq |p^{ac}_\beta, q^{ac}_{n+1}|_S +
|q^{ac}_{n+1}, q^{bc}_{\tilde n+1}|_S  + |q^{bc}_{\tilde n+1},
p^{bc}_\delta|_S \leq 6K,\\
|p^{ab}_j, p^{ac}_\alpha|_S &\leq |p^{ab}_j, q^{ab}_m|_S +
|q^{ab}_{m}, q^{ac}_{n}|_S  + |q^{ac}_{n}, p^{ac}_\alpha|_S
\leq  5K^2+4K+K\leq 10K^2,\\
|p^{ab}_k, p^{bc}_\gamma|_S &\leq |p^{ab}_k, q^{ab}_{m+1}|_S +
|q^{ab}_{m+1}, q^{bc}_{\tilde n}|_S  + |q^{bc}_{\tilde n},
p^{bc}_\gamma|_S \leq  10K^2.
\end{align*}
These estimates and the triangle inequality imply
\begin{equation}\label{eq:07} k-j \leq (\beta-\alpha) + (\delta-\gamma) +26K^2.\end{equation}

From inequality~\eqref{eq:02} it follows that $|j-\alpha|\leq
10K+10K^2\leq 20K^2$, and analogously, $|(\ell-k)-\gamma|\leq
20K^2$. In particular, $\alpha - 20K^2 \leq j\leq i$ and
$\gamma - 20K^2 \leq \ell -k\leq \ell -i,$ as desired. To
conclude the proof we argue by contradiction. Suppose that $i >
\beta +33K^2$ and $\ell-i > \delta+33K^2$. It follows that
\[i-j\geq i-\alpha- 20K^2 > \beta -\alpha+13K^2\]
and
\[k-i =(\ell-i)-(\ell-k) \geq (\ell-i)-\gamma-20K^2> \delta -\gamma+13K^2.\]
Adding these two inequalities yields $k-j> (\beta - \alpha) +
(\delta -\gamma) +26K^2$, which contradicts
inequality~\eqref{eq:07}.
\end{proof}

We now conclude the proof of Theorem~\ref{thm:stronglythin}.
Since the endpoints of $q^{ac}[n,n+1]$ are in $w$, it follows
that the endpoints of $p^{ac}[\alpha, \beta]$ are at
$S$-distance $\leq K$ from $w$. By quasiconvexity of $w$,
Lemma~\ref{lem:quasiconvexity}, all the vertices of
$p^{ac}[\alpha, \beta]$ are at $S$-distance $\leq K^2$
from~$w$. Analogously, all the vertices of $p^{bc}[\gamma,
\delta]$ are at $S$-distance $\leq K^2$ from~$w$. Then
Step~\ref{lem:final} yields $|w , p^{ac}_i |_S\leq K^2+33K^2<
D$ or $|w , p^{bc}_{\ell-i} |_S \leq 34K^2< D$.


\begin{thebibliography}{10}

\bibitem{ABC91}
J.~M. Alonso, T.~Brady, D.~Cooper, V.~Ferlini, M.~Lustig, M.~Mihalik,
  M.~Shapiro, and H.~Short.
\newblock Notes on word hyperbolic groups.
\newblock In {\em Group theory from a geometrical viewpoint ({T}rieste, 1990)},
  pages 3--63. World Sci. Publ., River Edge, NJ, 1991.
\newblock Edited by Short.

\bibitem{BaMi12}
Jonathan~Ariel Barmak and Elias~Gabriel Minian.
\newblock Strong homotopy types, nerves and collapses.
\newblock {\em Discrete Comput. Geom.}, 47(2):301--328, 2012.

\bibitem{Bo12}
B.~H. Bowditch.
\newblock Relatively hyperbolic groups.
\newblock {\em Internat. J. Algebra Comput.}, 22(3):1250016, 66, 2012.

\bibitem{Br67}
Glen~E. Bredon.
\newblock {\em Equivariant cohomology theories}.
\newblock Lecture Notes in Mathematics, No. 34. Springer-Verlag, Berlin-New
  York, 1967.

\bibitem{BrHa99}
Martin~R. Bridson and Andr{\'e} Haefliger.
\newblock {\em Metric spaces of non-positive curvature}, volume 319 of {\em
  Grundlehren der Mathematischen Wissenschaften [Fundamental Principles of
  Mathematical Sciences]}.
\newblock Springer-Verlag, Berlin, 1999.

\bibitem{ChOs15}
Victor Chepoi and Damian Osajda.
\newblock Dismantlability of weakly systolic complexes and applications.
\newblock {\em Trans. Amer. Math. Soc.}, 367(2):1247--1272, 2015.

\bibitem{Da03}
Fran{\c{c}}ois Dahmani.
\newblock {\em Les groupes relativement hyperboliques et leurs bords}.
\newblock PhD thesis, 2003.


\bibitem{Da03-2}
Fran{\c{c}}ois Dahmani.
\newblock Classifying spaces and boundaries for relatively hyperbolic groups.
\newblock {\em Proc. London Math. Soc. (3)}, 86(3):666--684, 2003.


\bibitem{DrSa08}
Cornelia Dru{\c{t}}u and Mark~V. Sapir.
\newblock Groups acting on tree-graded spaces and splittings of relatively
  hyperbolic groups.
\newblock {\em Adv. Math.}, 217(3):1313--1367, 2008.

\bibitem{Fl98}
Jeffrey~L. Fletcher.
\newblock {\em Homological Group Invariants}.
\newblock PhD thesis, 1998.

\bibitem{Ge96}
S.~M. Gersten.
\newblock Subgroups of word hyperbolic groups in dimension {$2$}.
\newblock {\em J. London Math. Soc. (2)}, 54(2):261--283, 1996.

\bibitem{HaMa15}
Richard~Gaelan Hanlon and Eduardo Mart\'inez-Pedroza.
\newblock A subgroup theorem for homological filling functions.
\newblock {\em Groups Geom. Dyn.}
\newblock (To appear).

\bibitem{HOP14}
Sebastian Hensel, Damian Osajda, and Piotr Przytycki.
\newblock Realisation and dismantlability.
\newblock {\em Geom. Topol.}, 18(4):2079--2126, 2014.

\bibitem{HK08}
G.~Christopher Hruska.
\newblock Relative hyperbolicity and relative quasiconvexity for countable
  groups.
\newblock {\em Algebr. Geom. Topol.}, 10(3):1807--1856, 2010.

\bibitem{La13}
Urs Lang.
\newblock Injective hulls of certain discrete metric spaces and groups.
\newblock {\em J. Topol. Anal.}, 5(3):297--331, 2013.

\bibitem{Lu89}
Wolfgang L{\"u}ck.
\newblock {\em Transformation groups and algebraic {$K$}-theory}, volume 1408
  of {\em Lecture Notes in Mathematics}.
\newblock Springer-Verlag, Berlin, 1989.
\newblock Mathematica Gottingensis.

\bibitem{LuMe00}
Wolfgang L{\"u}ck and David Meintrup.
\newblock On the universal space for group actions with compact isotropy.
\newblock In {\em Geometry and topology: {A}arhus (1998)}, volume 258 of {\em
  Contemp. Math.}, pages 293--305. Amer. Math. Soc., Providence, RI, 2000.

\bibitem{MP15-2}
Eduardo Mart\'inez-Pedroza.
\newblock Subgroups of relatively hyperbolic groups of bredon cohomological
  dimension $2$.
\newblock arXiv1508.04865., 2015.

\bibitem{MP15}
Eduardo Mart{\'{\i}}nez-Pedroza.
\newblock A {N}ote on {F}ine {G}raphs and {H}omological {I}soperimetric
  {I}nequalities.
\newblock {\em Canad. Math. Bull.}, 59(1):170--181, 2016.

\bibitem{MW11}
Eduardo Mart{\'{\i}}nez-Pedroza and Daniel~T. Wise.
\newblock Relative quasiconvexity using fine hyperbolic graphs.
\newblock {\em Algebr. Geom. Topol.}, 11(1):477--501, 2011.

\bibitem{MS02}
David Meintrup and Thomas Schick.
\newblock A model for the universal space for proper actions of a hyperbolic
  group.
\newblock {\em New York J. Math.}, 8:1--7 (electronic), 2002.

\bibitem{MiYa07}
Igor Mineyev and Asli Yaman.
\newblock Relative hyperbolicity and bounded cohomology.
\newblock Available at http://www.math.uiuc.edu/~mineyev/math/art/rel-hyp.pdf,
  2007.

\bibitem{Os06}
Denis~V. Osin.
\newblock Relatively hyperbolic groups: intrinsic geometry, algebraic
  properties, and algorithmic problems.
\newblock {\em Mem. Amer. Math. Soc.}, 179(843):vi+100, 2006.

\bibitem{Os07}
Denis~V. Osin.
\newblock Peripheral fillings of relatively hyperbolic groups.
\newblock {\em Invent. Math.}, 167(2):295--326, 2007.

\bibitem{Po93}
N.~Polat.
\newblock Finite invariant simplices in infinite graphs.
\newblock {\em Period. Math. Hungar.}, 27(2):125--136, 1993.

\bibitem{Se}
Yoav Segev.
\newblock Some remarks on finite {$1$}-acyclic and collapsible complexes.
\newblock {\em J. Combin. Theory Ser. A}, 65(1):137--150, 1994.

\bibitem{tD87}
Tammo tom Dieck.
\newblock {\em Transformation groups}, volume~8 of {\em de Gruyter Studies in
  Mathematics}.
\newblock Walter de Gruyter \& Co., Berlin, 1987.

\end{thebibliography}
\end{document}